\newtheorem{theorem}{Theorem}
\theoremstyle{plain}
\newtheorem{corollary}{Corollary}
\newtheorem{definition}{Definition}
\newtheorem{proposition}{Proposition}
\newtheorem{remark}{Remark}
\numberwithin{equation}{section}
\begin{document}
\title[Spectral measures on C-algebras]{Spectral measures on C-algebras of operators in $c_{0}(\mathbb{N})$}
\author{Aguayo, J.}
\curraddr{{\small Departamento de Matem\'{a}tica, Facultad de Ciencias F\'{\i}sicas y
Matem\'{a}ticas, Universidad de Concepcion, Casilla 160-C,
Concepci\'{o}n-Chile.}}
\email{jaguayo@udec.cl}
\author{Nova, M.}
\curraddr{{\small Departamento de Matem\'{a}tica y F\'{\i}sica Aplicadas, Facultad de
Ingenier\'{\i}a, Universidad Cat\'{o}lica de la Sant\'{\i}sima Concepci\'{o}n,
Casilla 297, Concepci\'{o}n Chile,}}
\email{{\small mnova@ucsc.cl}}
\author{Ojeda, J}
\curraddr{{\small Departamento de Matem\'{a}tica, Facultad de Ciencias F\'{\i}sicas y
Matem\'{a}ticas, Universidad de Concepcion, Casilla 160-C,
Concepci\'{o}n-Chile.}}
\email{jacqojeda@udec.cl}
\thanks{This work was partially supported by Proyecto VRID N$^{\circ}$ 214.014.038-1.0IN}
\subjclass[2010]{46L99, 46G10, 46S10, 12J25}
\keywords{Banach algebras; spectral measures; compact operators; self-adjoint operators}

\begin{abstract}
The main goal of this work is to introduce an analogous in the non-archimedean
context of the Gelfand spaces of some commutative Banach algebras with unit.
In order to do that, we will study the spectrum of this algebras and will show
that, under special condition, these algebras are isometrically isomorphic to
a respective spaces of continuous functions defined over some compacts. Such
isometries will preserve projections and will allow us to define associated
measures which are known like spectral measures. We will finish this work by
showing that any element of these algebras are integrable under these measures.

\end{abstract}
\maketitle

\section{Introduction and notation}

Many researchers have tried to generalize the elemental studies of Banach
algebras from classical case to vectorial structures over non-archimedean
fields. The first big task was to find a results similar to the Gelfand-Mazur
Theorem in this context. But, this theorem failed since every field $\Bbbk$
with a non-archimedean valuation is contained in another field $\mathbb{K}$
such that its valuation is an extension of the valuation of $\Bbbk$.

One of the main pioneers in the study of non-archimedean Banach algebras of
linear operators and spectral theory in this context has been M. Vishik
$\left[  7\right]  $, especially in the class of linear operators which admit
compact spectrum. We can also mention another important pioneer, Berkovick
$\left[  3\right]  ,$ who made a deep study of this subject on his survey.

The main goal of this work is to introduce an analogous in the non-archimedean
context of the Gelfand spaces of some commutative Banach algebras of linear
operators with unit. In order to do that, we will study the spectrum of this
algebras and will show that, under special condition, these algebras are
isometrically isomorphic to a respective spaces of continuous functions
defined over some compact. Such isometries will preserve idempotent elements
and will allow us to define associated measures which are known as spectral
measures. We will finish this work showing that each element of the
commutative Banach algebra described before can be represented as an integral
of some continuous function, where the integral has been defined by the
spectral measure.

Throughout this paper $\mathbb{K}$ is a valued field which is complete with
respect to the metric induced by the nontrivial non-archimedean valuation
$\left\vert \cdot\right\vert $ and its residue class field is formally real.

In the classical situation we can distinguish two type of normed spaces: those
spaces which are separable and those which are not separable. If $E$ is a
separable normed space over $\mathbb{K}$, then each one-dimensional subspaces
of $E$ is homeomorphic to $\mathbb{K}$, so $\mathbb{K}$ must be separable too.
Nevertheless, we know that there exist non-archimedean fields which are not
separable. Thus, for non-archimedean normed spaces the concept of separability
is meaningless if $\mathbb{K}$ is not separable. However, linearizing the
notion of separability, we obtain a useful generalization of this concept. A
normed space $E$ over a non-archimedean valued field is said to be
of\ countable\ type if it contains a countable subset whose linear hull is
dense in $E$. An example of a normed space of countable type is $\left(
c_{0},\left\Vert \cdot\right\Vert _{\infty}\right)  ,$ where $c_{0}$ is the
Banach space of all sequences $x=\left(  a_{n}\right)  _{n\in\mathbb{N}}$,
$a_{n}\in\mathbb{K}$, for which $\lim_{n\rightarrow\infty}a_{n}=0$ and its
norm is given by $\left\Vert x\right\Vert _{\infty}=\sup\left\{  \left\vert
a_{n}\right\vert :n\in\mathbb{N}\right\}  .$

A non-archimedean Banach space $E$ is said to be Free Banach space if there
exists a family $\left\{  e_{i}\right\}  _{i\in I}$ of non-null vectors of $E$
such that any element $x$ of $E$ can be written in the form of convergent sum
$x=\sum_{i\in I}x_{i}e_{i},\ x_{i}\in\mathbb{K},$ and $\left\Vert x\right\Vert
=\sup_{i\in I}\left\vert x_{i}\right\vert \left\Vert e_{i}\right\Vert .$ The
family $\left\{  e_{i}\right\}  _{i\in I}$ is called orthogonal basis of $E.$
If $s:I\rightarrow\left(  0,\infty\right)  ,$ then an example of Free Banach
space is $c_{0}\left(  I,\mathbb{K},s\right)  ,$ the collection of all
$x=\left(  x_{i}\right)  _{i\in I}$ such that for any $\epsilon>0,$ the set
$\left\{  i\in I:\left\vert x_{i}\right\vert s\left(  i\right)  >\epsilon
\right\}  $ is, at most, finite (or, equivalently, $\lim_{i\in I}\left\vert
x_{i}\right\vert s\left(  i\right)  =0,$ with respect to the Frechet filter on
$I$) and $\left\Vert x\right\Vert =\sup_{i\in I}\left\vert x_{i}\right\vert
s\left(  i\right)  $.

We already know that a Free Banach space $E$ is isometrically isomorphic to
$c_{0}\left(  I,\mathbb{K},s\right)  ,$ for some $s:I\rightarrow\left(
0,\infty\right)  .$ In particular if a Free Banach space is of countable type,
then it is isometrically isomorphic to $c_{0}\left(  \mathbb{N},\mathbb{K}%
,s\right)  ,$ for some $s:\mathbb{N}\rightarrow\left(  0,\infty\right)  .$
Note that if $s\left(  i\right)  \in\left\vert \mathbb{K}\right\vert ,$ then
$E$ is isometrically isomorphic to $c_{0}\left(  \mathbb{N},\mathbb{K}\right)
$ (or $c_{0}$ in short). \ For a detailed study of Free Banach spaces, in
general, we refer the reader to $\left[  4\right]  $

Now, since residual class field of $\mathbb{K}$ is formally real, the bilinear
form%
\[
\left\langle \cdot,\cdot\right\rangle :c_{0}\times c_{0}\rightarrow
\mathbb{K};\ \left\langle x,y\right\rangle =\sum_{i=1}^{\infty}x_{i}y_{i}%
\]
is an inner product, $\left\Vert \cdot\right\Vert =\sqrt{\left\vert
\left\langle \cdot,\cdot\right\rangle \right\vert }$ is a norm in $c_{0}$ and
the supremum norm $\left\Vert \cdot\right\Vert _{\infty}$ coincides with
$\left\Vert \cdot\right\Vert $, that is, $\left\Vert \cdot\right\Vert
=\left\Vert \cdot\right\Vert _{\infty}$ (see $\left[  5\right]  $). Therefore,
to study the Free Banach spaces of countable type it is enough to study the
space $c_{0}.$

If $E$ and $F$ are $\mathbb{K}$-normed spaces, then $\mathcal{L}\left(
E,F\right)  $ will be the $\mathbb{K}$-normed space consisting of all
continuous linear maps from $E$ into $F.$ If $F=E,$ then $\mathcal{L}\left(
E\right)  =\mathcal{L}\left(  E,E\right)  .$ For any $T\in\mathcal{L}\left(
E,F\right)  ,$ $N\left(  T\right)  $ will denote its Kernel and $R\left(
T\right)  $ its range.

A linear operator $T$ from $E$ into $F$ is said to be compact operator if
$T\left(  B_{E}\right)  $ is compactoid, where $B_{E}$ =$\left\{  x\in
E:\left\Vert x\right\Vert \leq1\right\}  $ is the unit ball of $E.$ It was
proved in $\left[  6\right]  $ that $T$ is compact if and only if, for each
$\epsilon>0,$ there exists a lineal operator of finite-dimensional range $S$
such that $\left\Vert T-S\right\Vert \leq\epsilon.$

Since $c_{0}$ is not orthomodular, there exist operators in $\mathcal{L}%
\left(  c_{0}\right)  $ which do not admit adjoint; for example, $T\left(
x\right)  =\left(  \sum_{i=1}^{\infty}x_{i}\right)  e_{1},\ x=\left(
x_{i}\right)  _{i\in\mathbb{N}}\in c_{0}.$ We will denote by $\mathcal{A}_{0}$
the collection of all elements of $\mathcal{L}\left(  c_{0}\right)  $ which
admit\ adjoint. A characterization of the elements of $\mathcal{A}_{0}$ (see
$\left[  2\right]  $) is the following:%
\[
\mathcal{A}_{0}=\left\{  T\in\mathcal{L}\left(  c_{0}\right)  :\forall y\in
c_{0},\ \lim_{i\rightarrow\infty}\left\langle Te_{i},y\right\rangle
=0\right\}  .
\]
Of course, $\mathcal{A}_{0}$ is a non-commutative Banach algebra with unit. \ 

Now, for each $a=\left(  a_{i}\right)  _{i\in\mathbb{N}}\in c_{0},$ the linear
operator $M_{a},$ defined by $M_{a}\left(  \cdot\right)  =\sum_{i=1}^{\infty
}a_{i}\left\langle \cdot,e_{i}\right\rangle e_{i},$ belongs to $\mathcal{A}%
_{0};$ moreover,
\[
\lim_{n\rightarrow\infty}\left\Vert M_{a}e_{n}\right\Vert =\lim_{n\rightarrow
\infty}\left\Vert \sum_{i=1}^{\infty}a_{i}\left\langle e_{n},e_{i}%
\right\rangle e_{i}\right\Vert =\lim_{n\rightarrow\infty}\left\vert
a_{n}\right\vert =0,
\]
meanwhile, the identity map $I$ is also an element of $\mathcal{A}_{0}$, but%
\[
\lim_{n\rightarrow\infty}\left\Vert I\left(  e_{n}\right)  \right\Vert
=\lim_{n\rightarrow\infty}\left\Vert e_{n}\right\Vert =1.
\]

Let us denote by $\mathcal{A}_{1}$ the collection of all $T\in\mathcal{L}%
\left(  c_{0}\right)  $ such that $\lim_{n\rightarrow\infty}Te_{n}=\theta,$
i.e.,
\[
\mathcal{A}_{1}=\left\{  T\in\mathcal{L}\left(  c_{0}\right)  :\lim
_{n\rightarrow\infty}Te_{n}=\theta\right\}  .
\]
From the fact that
\[
\left\vert \left\langle Te_{n},y\right\rangle \right\vert \leq\left\Vert
Te_{n}\right\Vert \left\Vert y\right\Vert ,
\]
we have that $\mathcal{A}_{1}\subsetneqq\mathcal{A}_{0}$ since $I\notin
\mathcal{A}_{1}.$

By $\left[  4\right]  ,$ we know that each $T\in\mathcal{L}\left(
c_{0}\right)  $ can be represented by $T=\sum_{i,j=1}^{\infty}\alpha
_{i,j}e_{j}^{\prime}\otimes e_{i},$ where $\lim_{i\rightarrow\infty}%
\alpha_{i,j}=0,\ $for all $j\in\mathbb{N}$, $\left\Vert T\right\Vert
=\sup\left\{  \left\Vert T\left(  e_{i}\right)  \right\Vert :i\in
\mathbb{N}\right\}  $ and $T$ is compact if and only if%
\[
\lim_{j\rightarrow\infty}\sup\left\{  \left\vert \alpha_{i,j}\right\vert
:i\in\mathbb{N}\right\}  =0.
\]
Now, since%
\begin{align*}
\left\Vert Te_{n}\right\Vert  &  =\left\Vert \left(  \sum_{i,j=1}^{\infty
}\alpha_{i,j}e_{j}^{\prime}\otimes e_{i}\right)  \left(  e_{n}\right)
\right\Vert =\left\Vert \sum_{i,j=1}^{\infty}\alpha_{i,j}e_{j}^{\prime}\left(
e_{n}\right)  e_{i}\right\Vert \\
&  =\left\Vert \sum_{i=1}^{\infty}\alpha_{i,n}e_{i}\right\Vert =\sup\left\{
\left\vert \alpha_{i,n}\right\vert :i\in\mathbb{N}\right\}  .
\end{align*}
Therefore,%
\[
T\in\mathcal{A}_{1}\Leftrightarrow T\in\mathcal{A}_{0}\text{ and }T\text{ is
compact}%
\]

Let $\left\{  y^{\left(  i\right)  }\right\}  _{i\in\mathbb{N}}$ be a sequence
in $c_{0}.$ We will say that $\left\{  y^{\left(  i\right)  }\right\}
_{i\in\mathbb{N}}$ is orthonormal if $\left\langle y^{\left(  i\right)
},y^{\left(  j\right)  }\right\rangle =0,\ i\neq j,$ and $\left\Vert
y^{\left(  i\right)  }\right\Vert =1.$On the other hand, we will understand by
a normal projection to any projection $P:c_{0}\rightarrow c_{0}$ such that
$\left\langle x,y\right\rangle =0$ for any $x\in N\left(  P\right)  $ and
$y\in R\left(  P\right)  .$ For example, if $y\in c_{0},\ y\neq0,$ is fixed$,$
then $P\left(  \cdot\right)  =\frac{\left\langle \cdot,y\right\rangle
}{\left\langle y,y\right\rangle }y$ is a normal projection.

The next theorem characterizes compact and self-adjoint operators. Its proof
is similar to the proof given in $\left[  2\right]  $, so we will omit it.

\begin{theorem}
If the linear operator $T:c_{0}\rightarrow c_{0}$ is compact and self-adjoint,
then there exists an element $\lambda=\left(  \lambda_{i}\right)
_{i\in\mathbb{N}}\in c_{0}$ and an orthonormal sequence $\left\{  y^{\left(
i\right)  }\right\}  _{i\in\mathbb{N}}$ in $c_{0}$ such that%
\[
T=\sum_{i=1}^{\infty}\lambda_{i}P_{i}%
\]
and $\left\Vert T\right\Vert =\left\Vert \lambda\right\Vert ,$ where
\[
P_{i}\left(  \cdot\right)  =\frac{\left\langle \cdot,y^{\left(  i\right)
}\right\rangle }{\left\langle y^{\left(  i\right)  },y^{\left(  i\right)
}\right\rangle }y^{\left(  i\right)  }%
\]
is the normal projection defined by $y^{\left(  i\right)  }.$
\end{theorem}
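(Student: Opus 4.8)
The plan is to adapt the classical Hilbert-space argument, peeling off one eigenvalue at a time, but replacing each compactness/continuity step (which is unavailable here, since the unit sphere of $c_{0}$ is not compact and the Rayleigh quotient cannot be maximized) by the compactoidness of $T(B_{c_{0}})$ together with the norm formula $\left\Vert T\right\Vert =\sup_{n}\left\Vert Te_{n}\right\Vert$. First I would record the self-adjoint analogue of the $C^{\ast}$-identity: from $\left\Vert Tx\right\Vert ^{2}=\left\vert \left\langle Tx,Tx\right\rangle \right\vert =\left\vert \left\langle T^{2}x,x\right\rangle \right\vert \leq \left\Vert T^{2}x\right\Vert \left\Vert x\right\Vert$ one gets $\left\Vert T\right\Vert ^{2}\leq \left\Vert T^{2}\right\Vert$, and the reverse inequality is automatic, so $\left\Vert T^{2}\right\Vert =\left\Vert T\right\Vert ^{2}$; iterating yields $\left\Vert T^{2^{k}}\right\Vert =\left\Vert T\right\Vert ^{2^{k}}$, whence the spectral radius of $T$ equals $\left\Vert T\right\Vert$.

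The crucial step, which I expect to be the main obstacle, is to produce a first eigenpair: a scalar $\lambda_{1}\in \mathbb{K}$ with $\left\vert \lambda_{1}\right\vert =\left\Vert T\right\Vert$ and a unit vector $y^{(1)}$ satisfying $Ty^{(1)}=\lambda_{1}y^{(1)}$. Because $T$ is compact, the Riesz theory available in this setting forces its nonzero spectrum to consist of eigenvalues, and combined with the spectral-radius computation above this yields an eigenvalue of maximal modulus $\left\Vert T\right\Vert$. The delicate point is precisely that the extremal value must be attained by a genuine eigenvalue lying in $\mathbb{K}$ rather than merely in an extension field; this is where the non-archimedean difficulties concentrate, and where I would invoke the argument of [2].

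Once $y^{(1)}$ is available, self-adjointness shows that $T$ leaves $N(P_{1})=\{x:\langle x,y^{(1)}\rangle=0\}$ invariant, since $\langle x,y^{(1)}\rangle=0$ implies $\langle Tx,y^{(1)}\rangle=\langle x,Ty^{(1)}\rangle=\lambda_{1}\langle x,y^{(1)}\rangle=0$. As $P_{1}$ is a normal projection, $c_{0}=R(P_{1})\oplus N(P_{1})$ orthogonally, and $N(P_{1})$ is again a free space of countable type carrying the same inner product, on which $T$ restricts to a compact self-adjoint operator of norm at most $\left\Vert T\right\Vert$. Iterating the construction on $N(P_{1})$ produces an orthonormal sequence $\{y^{(i)}\}$ and eigenvalues $\lambda_{i}$ with $Ty^{(i)}=\lambda_{i}y^{(i)}$ and $\left\vert \lambda_{1}\right\vert \geq \left\vert \lambda_{2}\right\vert \geq \cdots$.

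It remains to prove convergence. The family $\{y^{(i)}\}$ is bounded and orthogonal, so if infinitely many $\left\vert \lambda_{i}\right\vert =\left\Vert Ty^{(i)}\right\Vert$ stayed bounded away from $0$, the orthogonal vectors $Ty^{(i)}$ would violate the compactoidness of $T(B_{c_{0}})$; hence $\left\vert \lambda_{i}\right\vert \to 0$, so $\lambda=(\lambda_{i})\in c_{0}$ and $\left\Vert \lambda\right\Vert =\sup_{i}\left\vert \lambda_{i}\right\vert =\left\vert \lambda_{1}\right\vert =\left\Vert T\right\Vert$. Finally, writing $Q_{n}=I-\sum_{i=1}^{n}P_{i}$ for the normal projection onto $(y^{(1)},\dots,y^{(n)})^{\perp}$, one checks that $T-\sum_{i=1}^{n}\lambda_{i}P_{i}=TQ_{n}$, whose norm equals $\left\vert \lambda_{n+1}\right\vert$ and therefore tends to $0$; thus $\sum_{i=1}^{\infty}\lambda_{i}P_{i}$ converges in operator norm to $T$, which completes the proof.
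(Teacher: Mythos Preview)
The paper does not actually prove this theorem: immediately before the statement it says that the proof ``is similar to the proof given in $[2]$, so we will omit it.'' Your sketch follows precisely the classical eigenvalue--peeling strategy that $[2]$ adapts to the non-archimedean setting, and you yourself defer the one genuinely delicate step---existence of an eigenvalue $\lambda_{1}\in\mathbb{K}$ with $|\lambda_{1}|=\Vert T\Vert$---to the argument of $[2]$. So your approach is the same as the one the paper points to, and as an outline it is sound: the $C^{\ast}$-type identity $\Vert T^{2}\Vert=\Vert T\Vert^{2}$, the invariance of $N(P_{1})$ under $T$, the decay $|\lambda_{i}|\to 0$ forced by compactoidness, and the remainder identity $T-\sum_{i=1}^{n}\lambda_{i}P_{i}=TQ_{n}$ are all correct in this setting.

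One small caveat worth tightening if you write this out in full: when you say ``$N(P_{1})$ is again a free space of countable type carrying the same inner product,'' you are using that closed subspaces of $c_{0}$ inherit an orthogonal basis so that the eigenvalue-existence argument from $[2]$ can be re-applied verbatim at each stage. This is true here (and is how $[2]$ proceeds), but it is a structural fact about $c_{0}$ over a field with formally real residue class field, not a triviality, so it deserves an explicit reference rather than a passing clause.
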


\begin{remark}
This theorem gives us a characterization for compact and self-adjoint
operators. In fact, it is not hard to see that if we take $\lambda=\left(
\lambda_{i}\right)  _{i\in\mathbb{N}}\in c_{0}\ $and an orthonormal sequences
$\left\{  y^{\left(  i\right)  }\right\}  _{i\in\mathbb{N}}$ in $c_{0}$, the
operator
\[
T=\sum_{i=1}^{\infty}\lambda_{i}P_{i},
\]
where $P_{i}$ is as in the Theorem$,$ is compact and self-adjoint and
$\left\Vert T\right\Vert =\left\Vert \lambda\right\Vert $.
\end{remark}

\section{Algebra of operators}

\subsection{An algebra without unit}

From now on, we will consider a fixed orthonormal sequence $Y=\left\{
y^{\left(  i\right)  }\right\}  _{i\in\mathbb{N}}$ in $c_{0}.$ We will denote
by $\mathfrak{T}_{Y}(c_{0})$ the collection of all compact operators
$T_{\lambda},\ \lambda\in c_{0},$ where%
\[
T_{\lambda}=\sum_{i=1}^{\infty}\lambda_{i}P_{i}%
\]
As we know, the adjoint $T_{\lambda}^{\ast}$ of $T_{\lambda}$ is itself and
$\lim_{n\rightarrow\infty}T_{\lambda}\left(  e_{i}\right)  =0.$ Obviously, the
collection $\mathfrak{T}_{Y}(c_{0})\,\ $is a linear space, since%
\[
T_{\lambda}+T_{\mu}=T_{\lambda+\mu};\ \ \alpha T_{\lambda}=T_{\alpha\lambda}%
\]
On the other hand, since $c_{0}$ is a commutative algebra with the operation
$\lambda\cdot\mu=\left(  \lambda_{i}\mu_{i}\right)  ,$ we have%
\[
T_{\lambda}\circ T_{\mu}=T_{\lambda\cdot\mu}=T_{\mu}\circ T_{\lambda}.
\]
Therefore, $\mathfrak{T}_{Y}(c_{0})$ is a commutative algebra without unit.
Even more, by the fact that $T_{\lambda}=T_{\mu}$ implies $\lambda=\mu$ (see
$\left[  2\right]  $), the linear transformation%
\[
\Lambda:c_{0}\rightarrow\mathfrak{T}_{Y}(c_{0});\ \lambda\rightarrow
\Lambda\left(  \lambda\right)  =T_{\lambda}%
\]
is an isometric isomorphism of algebras.

As we know, each algebra without unit can be transformed in an algebra with
unit, considering the collection $E^{+}=\mathbb{K}\oplus E$ provided with the
usual linear operations and the multiplication operation defined by%
\[
\left(  \alpha,\mu\right)  \cdot\left(  \beta,\nu\right)  =\left(  \alpha
\beta,\alpha\nu+\beta\mu+\mu\cdot\nu\right)
\]
The unit of this algebra is $\left(  1,\theta\right)  $. If $E$ is, in
particular, a normed space, then $E^{+}$ so is and%
\[
\left\Vert \left(  \alpha,\mu\right)  \right\Vert =\max\left\{  \left\vert
\alpha\right\vert ,\left\Vert \mu\right\Vert \right\}  .
\]

Now, the commutative Banach algebra $\left(  \mathfrak{T}_{Y}(c_{0}%
),+,\cdot,\circ,\left\Vert \cdot\right\Vert \right)  $ can be transformed, as
above, in a commutative Banach algebra $\left(  \mathfrak{T}_{Y}(c_{0}%
)^{+},+,\cdot,\circ,\left\Vert \cdot\right\Vert \right)  $ with unit. By the
fact that $c_{0}$ is isometrically isomorphic to $\mathfrak{T}_{Y}(c_{0})$,
$\mathfrak{T}_{Y}(c_{0})^{+}$ is isometrically isomorphic to $c_{0}{}^{+}$

\subsection{An algebra with unit}

We will denote by $\mathcal{S}_{Y}\left(  c_{0}\right)  $ the collection of
all linear operators $\alpha I+T_{\lambda},\ \alpha\in\mathbb{K}$
and$\ T_{\lambda}\in\mathfrak{T}_{Y}(c_{0}).$ $\mathcal{S}_{Y}\left(
c_{0}\right)  $ is an normed space and since
\begin{align*}
\left(  \alpha_{1}I+T_{\mu}\right)  \circ\left(  \alpha_{2}I+T_{\nu}\right)
&  =\alpha_{1}\alpha_{2}I+\alpha_{1}T_{\nu}+\alpha_{2}T_{\mu}+T_{\mu}\circ
T_{\nu}\\
&  =\alpha_{1}\alpha_{2}I+T_{\alpha_{1}\nu+\alpha_{2}\mu+\mu\nu}%
\end{align*}
we conclude that $\mathcal{S}_{Y}\left(  c_{0}\right)  $ is a commutative
algebra with unit.

\begin{theorem}
The algebra $\mathcal{S}_{Y}\left(  c_{0}\right)  $ is isometrically
isomorphic to $\mathfrak{T}_{Y}(c_{0})^{+}.$ As a consequence, $\mathcal{S}%
_{Y}\left(  c_{0}\right)  $ is a commutative Banach algebra with unit.

\begin{proof}
We define%
\begin{align*}
\mathfrak{T}_{Y}(c_{0})^{+}  &  \rightarrow\mathcal{S}_{Y}\left(  c_{0}\right)
\\
\left(  \alpha,T_{\lambda}\right)   &  \rightarrow\alpha I+T_{\lambda}%
\end{align*}
Since $\alpha T_{\mu}+\beta T_{\lambda}+T_{\lambda}\circ T_{\mu}=T_{\alpha
\mu+\beta\lambda+\mu\lambda},$ this transformation is an algebra
homomorphism$.$ Obviously, this homomorphism is onto; hence it is enough to
prove that it is an isometry. We claim that%
\[
\left\Vert \alpha I+T_{\lambda}\right\Vert =\left\Vert \left(  \alpha
,T_{\lambda}\right)  \right\Vert =\max\left\{  \left\vert \alpha\right\vert
,\left\Vert T_{\lambda}\right\Vert \right\}
\]
If $\alpha=0$ or $\left\Vert T_{\lambda}\right\Vert =0$ or $\left\Vert \alpha
I\right\Vert \neq\left\Vert T_{\lambda}\right\Vert ,$ we are done. We only
need to check when%
\[
\left\vert \alpha\right\vert =\left\Vert \alpha I\right\Vert =\left\Vert
T_{\lambda}\right\Vert \neq0.
\]
Of course,
\[
\left\Vert \alpha I+T_{\lambda}\right\Vert \leq\max\left\{  \left\vert
\alpha\right\vert ,\left\Vert T_{\lambda}\right\Vert \right\}  .
\]
Now, by the compactness of $T_{\lambda}$,
\[
\lim_{n\rightarrow\infty}T_{\lambda}\left(  e_{n}\right)  =0.
\]
Thus, there exists $N\in\mathbb{N}$ such that
\[
n\geq N\Rightarrow\left\Vert T_{\lambda}\left(  e_{n}\right)  \right\Vert
<\left\vert \alpha\right\vert
\]
Therefore,%
\begin{align*}
\left\Vert \alpha I+T_{\lambda}\right\Vert  &  =\sup\left\{  \left\Vert \alpha
e_{n}+T_{\lambda}\left(  e_{n}\right)  \right\Vert :n\in\mathbb{N}\right\} \\
&  =\max\left\{  \left\Vert \alpha e_{1}+T_{\lambda}\left(  e_{1}\right)
\right\Vert ,\ \left\Vert \alpha e_{2}+T_{\lambda}\left(  e_{2}\right)
\right\Vert ,\ldots,\left\Vert \alpha e_{N-1}+T_{\lambda}\left(
e_{N-1}\right)  \right\Vert ,\left\vert \alpha\right\vert \right\} \\
&  =\left\vert \alpha\right\vert =\max\left\{  \left\vert \alpha\right\vert
,\left\Vert T_{\lambda}\right\Vert \right\}  =\left\Vert \left(
\alpha,T_{\lambda}\right)  \right\Vert .
\end{align*}

\end{proof}
\end{theorem}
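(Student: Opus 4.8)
The plan is to write down the obvious candidate map and verify the homomorphism and surjectivity properties quickly, then concentrate almost all of the work on the isometry. Define $\Phi:\mathfrak{T}_{Y}(c_{0})^{+}\rightarrow\mathcal{S}_{Y}\left(c_{0}\right)$ by $\Phi(\alpha,T_{\lambda})=\alpha I+T_{\lambda}$. Linearity is immediate from the two vector-space structures. To see that $\Phi$ is multiplicative I would expand the product $(\alpha,T_{\mu})\cdot(\beta,T_{\nu})$ in $\mathfrak{T}_{Y}(c_{0})^{+}$ using the defining formula for multiplication in $E^{+}$ together with the already-established identities $T_{\mu}\circ T_{\nu}=T_{\mu\cdot\nu}$ and $\alpha T_{\nu}+\beta T_{\mu}+T_{\mu\cdot\nu}=T_{\alpha\nu+\beta\mu+\mu\cdot\nu}$, and then compare the result with the product $(\alpha I+T_{\mu})\circ(\beta I+T_{\nu})$ computed directly in $\mathcal{S}_{Y}\left(c_{0}\right)$; both collapse to $\alpha\beta I+T_{\alpha\nu+\beta\mu+\mu\cdot\nu}$, so $\Phi$ is a homomorphism sending the unit $(1,\theta)$ to $I$. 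Surjectivity holds by the very definition of $\mathcal{S}_{Y}\left(c_{0}\right)$ as the collection of all operators $\alpha I+T_{\lambda}$, while injectivity will follow for free once the isometry is established.

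The core of the argument is the claim $\left\Vert\alpha I+T_{\lambda}\right\Vert=\max\{|\alpha|,\left\Vert T_{\lambda}\right\Vert\}$. The inequality $\left\Vert\alpha I+T_{\lambda}\right\Vert\leq\max\{|\alpha|,\left\Vert T_{\lambda}\right\Vert\}$ is just the non-archimedean triangle inequality for the operator norm. For the reverse inequality I would split into cases according to the relative sizes of $|\alpha|=\left\Vert\alpha I\right\Vert$ and $\left\Vert T_{\lambda}\right\Vert$. If $\alpha=0$, or if $\left\Vert T_{\lambda}\right\Vert=0$ (equivalently $\lambda=0$, since $\Lambda$ is an isometry and hence $\left\Vert T_{\lambda}\right\Vert=\left\Vert\lambda\right\Vert$, and $\left\Vert I\right\Vert=1$), or if $|\alpha|\neq\left\Vert T_{\lambda}\right\Vert$, then the conclusion is immediate from the strong (isosceles) triangle inequality, which forces $\left\Vert A+B\right\Vert=\max\{\left\Vert A\right\Vert,\left\Vert B\right\Vert\}$ whenever $\left\Vert A\right\Vert\neq\left\Vert B\right\Vert$.

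The only genuinely delicate situation, and the step I expect to be the main obstacle, is $|\alpha|=\left\Vert T_{\lambda}\right\Vert\neq0$, where the isosceles principle gives no information because the two summands have equal norm. Here I would exploit compactness of $T_{\lambda}$: by the earlier analysis this means $T_{\lambda}\in\mathcal{A}_{1}$, so $\lim_{n\rightarrow\infty}T_{\lambda}(e_{n})=\theta$, and one can choose $N$ with $\left\Vert T_{\lambda}(e_{n})\right\Vert<|\alpha|$ for all $n\geq N$. For such $n$ the isosceles inequality yields $\left\Vert\alpha e_{n}+T_{\lambda}(e_{n})\right\Vert=|\alpha|$ (using $\left\Vert e_{n}\right\Vert=1$), whereas for the finitely many indices $n<N$ one only has $\left\Vert\alpha e_{n}+T_{\lambda}(e_{n})\right\Vert\leq\max\{|\alpha|,\left\Vert T_{\lambda}\right\Vert\}=|\alpha|$. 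Invoking the basis formula $\left\Vert S\right\Vert=\sup_{n}\left\Vert S(e_{n})\right\Vert$ for operators on $c_{0}$, the supremum is attained at the level $|\alpha|$, so $\left\Vert\alpha I+T_{\lambda}\right\Vert=|\alpha|=\max\{|\alpha|,\left\Vert T_{\lambda}\right\Vert\}$. This establishes that $\Phi$ is an isometric algebra isomorphism; since $\mathfrak{T}_{Y}(c_{0})^{+}$ is a Banach algebra with unit, the transport of structure makes $\mathcal{S}_{Y}\left(c_{0}\right)$ one as well.
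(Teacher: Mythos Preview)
Your proposal is correct and follows essentially the same approach as the paper's own proof: the same map $(\alpha,T_{\lambda})\mapsto\alpha I+T_{\lambda}$, the same case split on the relative sizes of $|\alpha|$ and $\Vert T_{\lambda}\Vert$, and in the equal-norm case the same use of compactness ($T_{\lambda}(e_{n})\to\theta$) together with the basis formula $\Vert S\Vert=\sup_{n}\Vert S(e_{n})\Vert$ to produce an index where the norm is exactly $|\alpha|$. If anything, your write-up is slightly more explicit about why the isosceles triangle inequality handles the unequal-norm cases and why injectivity comes for free from the isometry.
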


\begin{remark}
Since $c_{0}^{+}$ is isometrically isomorphic to $\mathfrak{T}_{Y}(c_{0})^{+}$
and, at the same time, this last is isometrically isomorphic to $\mathcal{S}%
_{Y}\left(  c_{0}\right)  ,$ $c_{0}^{+}$ is isometrically isomorphic to
$\mathcal{S}_{Y}\left(  c_{0}\right)  .$
\end{remark}

We claim that the usual norm in $\mathcal{S}_{Y}\left(  c_{0}\right)  $ is
power multiplicative, that is, for each $T\in\mathcal{S}_{Y}\left(
c_{0}\right)  ,$
\[
\left\Vert T^{n}\right\Vert =\left\Vert T\right\Vert ^{n}.
\]
In fact, by the remark, it is enough to study this property in $c_{0}^{+}.$

The norm in $c_{0}^{+}$ was defined by $\left\Vert \left(  \alpha,a\right)
\right\Vert =\max\left\{  \left\vert \alpha\right\vert ,\left\Vert
a\right\Vert \right\}  $. On the other hand, for any $\left(  \alpha,a\right)
\in c_{0}^{+}$
\begin{align*}
\left(  \alpha,a\right)  ^{2}  &  =\left(  \alpha^{2},2\alpha a+a^{2}\right)
\\
\left(  \alpha,a\right)  ^{3}  &  =\left(  \alpha^{2},2\alpha a+a^{2}\right)
\cdot\left(  \alpha,a\right) \\
&  =\left(  \alpha^{3},\left(
\begin{array}
[c]{c}%
3\\
1
\end{array}
\right)  \alpha^{2}a+\left(
\begin{array}
[c]{c}%
3\\
2
\end{array}
\right)  \alpha a^{2}+\left(
\begin{array}
[c]{c}%
3\\
3
\end{array}
\right)  a^{3}\right)
\end{align*}
In general, for each $k\in\mathbb{N},$%
\[
\left(  \alpha,a\right)  ^{k}=\left(  \alpha^{k},\sum_{i=1}^{k}\left(
\begin{array}
[c]{c}%
k\\
i
\end{array}
\right)  \alpha^{k-i}a^{i}\right)
\]
Now, let us start analyzing $\left\Vert \left(  \alpha,a\right)
^{k}\right\Vert .$ In general, we have%
\[
\left\Vert \left(  \alpha,a\right)  ^{k}\right\Vert \leq\left\Vert \left(
\alpha,a\right)  \right\Vert ^{k}%
\]
The power multiplicative property is, obviously, satisfied by the supremum
norm in $c_{0},$ that is,
\[
\left\Vert a^{k}\right\Vert =\left\Vert a\right\Vert ^{k}.
\]
\newline If $\alpha=0$ or $a=\theta,$ then
\[
\left\Vert \left(  \alpha,a\right)  ^{k}\right\Vert =\left\Vert \left(
\alpha,a\right)  \right\Vert ^{k}%
\]
If $\left\vert \alpha\right\vert <\left\Vert a\right\Vert $, then we have%
\begin{align*}
\left\Vert \sum_{i=1}^{k-1}\left(
\begin{array}
[c]{c}%
k\\
i
\end{array}
\right)  \alpha^{k-i}a^{i}\right\Vert  &  \leq\max\left\{  \left\vert
\alpha\right\vert ^{k-i}\left\Vert a^{i}\right\Vert :i=1,\ldots,k-1\right\} \\
&  <\max\left\{  \left\Vert a\right\Vert ^{k-i}\left\Vert a\right\Vert
^{i}:i=1,\ldots,k-1\right\}  =\left\Vert a\right\Vert ^{k}%
\end{align*}
Thus,%
\[
\left\Vert \sum_{i=1}^{k}\left(
\begin{array}
[c]{c}%
k\\
i
\end{array}
\right)  \alpha^{k-i}a^{i}\right\Vert =\left\Vert a^{k}+\sum_{i=1}%
^{k-1}\left(
\begin{array}
[c]{c}%
k\\
i
\end{array}
\right)  \alpha^{k-i}a^{i}\right\Vert =\left\Vert a\right\Vert ^{k}%
\]
Therefore,%
\begin{align*}
\left\Vert \left(  \alpha,a\right)  \right\Vert ^{k}  &  =\max\left\{
\left\vert \alpha\right\vert ^{k},\left\Vert a\right\Vert ^{k}\right\} \\
&  =\left\Vert \left(  \alpha,a\right)  ^{k}\right\Vert
\end{align*}
Suppose, finally, that $\left\vert \alpha\right\vert \geq\left\Vert
a\right\Vert .$ Then,%
\[
\left\Vert \sum_{i=1}^{k}\left(
\begin{array}
[c]{c}%
k\\
i
\end{array}
\right)  \alpha^{k-i}a^{i}\right\Vert \leq\max\left\{  \left\vert
\alpha\right\vert ^{k-i}\left\Vert a^{i}\right\Vert :i=1,\ldots,k\right\}
\leq\left\vert \alpha\right\vert ^{k}%
\]
which implies
\begin{align*}
\left\Vert \left(  \alpha,a\right)  ^{k}\right\Vert  &  =\max\left\{
\left\vert \alpha\right\vert ^{k},\left\Vert \sum_{i=1}^{k}\left(
\begin{array}
[c]{c}%
k\\
i
\end{array}
\right)  \alpha^{k-i}a^{i}\right\Vert \right\} \\
&  =\left\vert \alpha\right\vert ^{k}=\max\left\{  \left\vert \alpha
\right\vert ^{k},\left\Vert a\right\Vert ^{k}\right\}  =\left\Vert \left(
\alpha,a\right)  \right\Vert ^{k}%
\end{align*}

\begin{definition}
An commutative Banach algebra $\mathcal{A}$ with unit is called a C-algebra if
there exists a locally compact space $X$ such that $\mathcal{A}$ is
isometrically isomorphic to $C_{\infty}\left(  X\right)  ,$ where $C_{\infty
}\left(  X\right)  $ is the space of all continuous functions from $X$ into
$\mathbb{K}$ which vanishes at infinity.
\end{definition}

As we know $\left\{  e_{j}=\left(  \delta_{i,j}\right)  _{i\in\mathbb{N}}%
:j\in\mathbb{N}\right\}  $, where $\delta_{i,j}$ denotes the Kronecker symbol,
is the canonical basis of $c_{0}\,.$ Since%
\[
e_{j}^{2}=e_{j}%
\]
and%
\[
\overline{\left\langle \left\{  e_{j}:j\in\mathbb{N}\right\}  \right\rangle
}=c_{0}%
\]
we conclude that the collection of all the idempotent elements of $c_{0}$ with
norm less than 1 is dense in $c_{0}.$ As a consequence, $c_{0}$ is a C-algebra
(Th. 6.12 $\left[  6\right]  $).

\begin{theorem}
$\mathcal{S}_{Y}\left(  c_{0}\right)  $ is a C-algebra.

\begin{proof}
It follows from the fact that $c_{0}^{+}$ is isometrically isomorphic to
$\mathcal{S}_{Y}\left(  c_{0}\right)  $.
\end{proof}
\end{theorem}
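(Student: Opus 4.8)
The plan is to reduce everything to an explicit functional model, using that the property of being a C-algebra is invariant under isometric algebra isomorphism. By the Remark we already have isometric isomorphisms $\mathcal{S}_{Y}(c_{0})\cong\mathfrak{T}_{Y}(c_{0})^{+}\cong c_{0}^{+}$, so composing them it is enough to produce a locally compact space $X$ together with an isometric algebra isomorphism $c_{0}^{+}\cong C_{\infty}(X)$. Since $c_{0}=C_{\infty}(\mathbb{N})$ with $\mathbb{N}$ discrete (hence locally compact but not compact), the natural candidate for $X$ is the Alexandroff one-point compactification $\mathbb{N}\cup\{\infty\}$: adjoining a unit to an algebra of functions vanishing at infinity should correspond to adding a point at infinity.

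Concretely, I would let $c$ denote the algebra of convergent $\mathbb{K}$-valued sequences and define
\[
\Phi:c_{0}^{+}\rightarrow c,\qquad\Phi(\alpha,\mu)=(\alpha+\mu_{n})_{n\in\mathbb{N}}.
\]
First I would check that $\Phi$ is an algebra homomorphism: each coordinate of $\Phi\bigl((\alpha,\mu)\cdot(\beta,\nu)\bigr)$ equals $\alpha\beta+\alpha\nu_{n}+\beta\mu_{n}+\mu_{n}\nu_{n}=(\alpha+\mu_{n})(\beta+\nu_{n})$, so the unitization product matches the pointwise product in $c$, and the unit $(1,\theta)$ maps to the constant sequence $1$. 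Next, $\Phi$ is a bijection onto $c$, since a sequence $x=(x_{n})$ converging to $\ell$ is the image of $(\ell,(x_{n}-\ell)_{n})$, with $(x_{n}-\ell)_{n}\in c_{0}$. The one genuinely non-archimedean step is the isometry claim
\[
\sup_{n}|\alpha+\mu_{n}|=\max\{|\alpha|,\|\mu\|_{\infty}\}=\|(\alpha,\mu)\|;
\]
here the strong triangle inequality gives $\leq$, while for $\geq$ one uses $\mu_{n}\rightarrow0$ to see that $|\alpha+\mu_{n}|=|\alpha|$ for all large $n$, and that whenever $\|\mu\|_{\infty}>|\alpha|$ an index attaining $\|\mu\|_{\infty}$ (there are only finitely many above any threshold) already realizes the maximum. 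Finally, identifying $c$ with $C(\mathbb{N}\cup\{\infty\})$—continuity at $\infty$ being exactly convergence of the sequence—and noting that $\mathbb{N}\cup\{\infty\}$ is compact, so that $C(\mathbb{N}\cup\{\infty\})=C_{\infty}(\mathbb{N}\cup\{\infty\})$, completes the model with $X=\mathbb{N}\cup\{\infty\}$.

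I expect the main obstacle to be conceptual rather than computational: one cannot simply say ``$c_{0}$ is a C-algebra, hence so is $c_{0}^{+}$,'' because the definition of C-algebra requires a unit while $c_{0}=C_{\infty}(\mathbb{N})$ has none ($\mathbb{N}$ is not compact). The content is precisely that unitization forces the passage to the one-point compactification, turning $C_{\infty}(\mathbb{N})$ into $C(\mathbb{N}\cup\{\infty\})$, and the step that really uses the standing hypotheses is the norm identity above, which would fail over $\mathbb{R}$ or $\mathbb{C}$. As an alternative I could instead verify directly the hypotheses of Theorem 6.12 of $\left[ 6\right]$ for $c_{0}^{+}$, using the power-multiplicativity of its norm (already established above) together with the fact that the idempotents $(0,e_{j})$ and the unit generate a dense subalgebra; but the explicit identification with $C_{\infty}(\mathbb{N}\cup\{\infty\})$ is cleaner and more transparent.
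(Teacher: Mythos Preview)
Your argument is correct. Both you and the paper reduce to showing that $c_{0}^{+}$ is a C-algebra via the isomorphism $\mathcal{S}_{Y}(c_{0})\cong c_{0}^{+}$, but you differ in how that last step is carried out. The paper essentially follows the alternative you sketch at the end: just before the theorem it observes that the idempotents $e_{j}$ span a dense subspace of $c_{0}$ and invokes Theorem~6.12 of $\left[6\right]$ to declare $c_{0}$ a C-algebra, then tacitly passes to $c_{0}^{+}$. Your explicit identification $c_{0}^{+}\cong c\cong C(\mathbb{N}\cup\{\infty\})$ is more elementary---it needs no external citation---and it makes the one-point-compactification step visible, addressing the conceptual issue you correctly flag (the paper's own Definition~1 requires a unit, so calling the non-unital $c_{0}$ a C-algebra is formally at odds with it). The payoff of your route is an explicit Gelfand space from the outset; the payoff of the paper's citation-based route is brevity and an abstract criterion that transfers uniformly to closed subalgebras such as $\mathcal{L}_{T}$ later on.
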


\begin{remark}
We recall that the spectrum of a commutative Banach algebra $\mathfrak{A}$ is
the collection $Sp\left(  \mathfrak{A}\right)  $ of all non-null homomorphisms
defined from $\mathfrak{A}$ into $\mathbb{K}$, that is,
\[
Sp\left(  \mathfrak{A}\right)  =\left\{  \phi:\mathfrak{A}\rightarrow
\mathbb{K}:\phi\text{ is a non-null homomorphism}\right\}  .
\]
Note that the natural topology in $Sp\left(  \mathfrak{A}\right)  $ is induced
by the product topology in $\mathbb{K}^{\mathfrak{A}}$ and also for each
$\phi\in Sp\left(  \mathfrak{A}\right)  ,$ $\left\Vert \phi\right\Vert \leq1.$
For any $x\in\mathfrak{A},$ we define%
\[
G_{x}:Sp\left(  \mathfrak{A}\right)  \rightarrow\mathbb{K},\ \ \phi\rightarrow
G_{x}\left(  \phi\right)  =\phi\left(  x\right)  .
\]
which is clearly continuous and bounded. Let us denote by
\[
\left\Vert x\right\Vert _{sp}=\sup\left\{  \left\vert \phi\left(  x\right)
\right\vert :\phi\in Sp\left(  \mathfrak{A}\right)  \right\}  =\left\Vert
G_{x}\right\Vert _{\infty}%
\]
the spectral norm of $x.$ Since
\[
\left\vert \phi\left(  x\right)  \right\vert \leq\left\Vert \phi\right\Vert
\left\Vert x\right\Vert \leq\left\Vert x\right\Vert ,
\]
we have, in general, that
\[
\left\Vert x\right\Vert _{sp}\leq\left\Vert x\right\Vert .
\]
Finally, let us denote by $R\left(  G_{x}\right)  $ the range of $G_{x}.$ The
closure of $R\left(  G_{x}\right)  ,$ $\overline{R\left(  G_{x}\right)  },$ is
called spectrum of $x.$
\end{remark}

L. Narici (Cor. 6.16, $\left[  6\right]  $) proved the following result:

\begin{proposition}
A commutative Banach algebra $\mathfrak{A}$ with unit is a $C-$algebra if and
only if its spectrum $Sp\left(  \mathfrak{A}\right)  $ is compact and its
spectral norm $\left\Vert x\right\Vert _{sp}$ is equal to $\left\Vert
x\right\Vert $, for every $x\in\mathfrak{A}.$
\end{proposition}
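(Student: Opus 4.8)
The plan is to prove the biconditional by exhibiting, in the non-trivial direction, the Gelfand transform $\Gamma(x)=G_{x}$ from the Remark as the desired isometric isomorphism, and, in the other direction, by identifying the spectrum with the underlying compact space. Throughout I treat the two implications separately.

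For the \emph{sufficiency}, assume $Sp(\mathfrak{A})$ is compact and $\left\Vert x\right\Vert _{sp}=\left\Vert x\right\Vert$ for all $x$. Since $Sp(\mathfrak{A})$ is compact we have $C_{\infty}(Sp(\mathfrak{A}))=C(Sp(\mathfrak{A}))$, and I would show $\Gamma$ is an isometric isomorphism onto this space. It is an algebra homomorphism because $G_{xy}(\phi)=\phi(xy)=\phi(x)\phi(y)=G_{x}(\phi)G_{y}(\phi)$ and likewise for sums, and it sends the unit to the constant function $1$ since $\phi(e)=1$ for every $\phi\in Sp(\mathfrak{A})$. Each $G_{x}$ is continuous and bounded, as already observed, hence lies in $C(Sp(\mathfrak{A}))$. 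The hypothesis $\left\Vert x\right\Vert _{sp}=\left\Vert x\right\Vert$ reads exactly $\left\Vert G_{x}\right\Vert _{\infty}=\left\Vert x\right\Vert$, so $\Gamma$ is an isometry, in particular injective, and its image is complete, hence closed.

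What remains is surjectivity, which I expect to be the main obstacle. The image $\Gamma(\mathfrak{A})$ is a closed subalgebra of $C(Sp(\mathfrak{A}))$ containing the constants and separating points (two distinct homomorphisms differ on some $x$, so some $G_{x}$ separates them). I would then invoke the non-archimedean Stone--Weierstrass theorem (Kaplansky's version, valid for $C(K)$ over a complete non-archimedean field with $K$ compact) to conclude that $\Gamma(\mathfrak{A})$ is dense, and, being closed, equal to all of $C(Sp(\mathfrak{A}))$. This shows $\mathfrak{A}$ is a C-algebra. For the \emph{necessity}, assume $\mathfrak{A}$ is isometrically isomorphic to $C_{\infty}(X)$ for a locally compact $X$. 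Because $\mathfrak{A}$ has a unit, the constant function $1$ must belong to $C_{\infty}(X)$, and this forces $X$ to be compact, since a non-zero constant vanishes at infinity only on a compact space; thus $C_{\infty}(X)=C(X)$. I would then identify $Sp(\mathfrak{A})=Sp(C(X))$ with $X$ via the evaluations $x\mapsto\phi_{x}$, $\phi_{x}(f)=f(x)$: this map is injective because the functions in $C(X)$ separate the points of $X$, is continuous, and --- using compactness of $X$ --- is surjective, every $\mathbb{K}$-valued homomorphism of $C(X)$ being an evaluation (the non-archimedean Gelfand--Kolmogorov argument). Hence $Sp(\mathfrak{A})\cong X$ is compact, and the spectral norm is computed through these evaluations: $\left\Vert f\right\Vert _{sp}=\sup_{\phi}\left\vert \phi(f)\right\vert \geq\sup_{x\in X}\left\vert f(x)\right\vert =\left\Vert f\right\Vert _{\infty}=\left\Vert f\right\Vert$, while $\left\Vert f\right\Vert _{sp}\leq\left\Vert f\right\Vert$ always holds, giving equality.

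The two crux points are therefore the surjectivity claims: onto $C(Sp(\mathfrak{A}))$ in the sufficiency, resting on non-archimedean Stone--Weierstrass, and onto $Sp(C(X))$ in the necessity, resting on every character of $C(X)$ being a point evaluation. I would emphasize that the compactness hypothesis is genuinely needed: over a field $\mathbb{K}$ that is not locally compact the closed balls of $\mathbb{K}$ are not compact, so Tychonoff does not render $Sp(\mathfrak{A})$ automatically compact, and it is precisely this gap that the $C(X)$-structure supplies.
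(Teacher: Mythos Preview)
The paper does not actually prove this proposition: it is stated as a quotation of a known result (``L.\ Narici (Cor.\ 6.16, [6])'' in the text, where [6] is Van Rooij's book), with no argument supplied. So there is no in-paper proof to compare your attempt against.

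Your outline is the standard one and is essentially correct; in fact the paper itself later invokes Kaplansky's non-archimedean Stone--Weierstrass theorem (Th.\ 5.28 in [6]) in the proof of the next proposition, so your appeal to it for the surjectivity of the Gelfand map is exactly in the spirit of the source being cited. Two small points worth tightening in the necessity direction: the injectivity of $x\mapsto\phi_{x}$ relies on $C(X,\mathbb{K})$ separating points of $X$, which in the non-archimedean setting requires $X$ to be zero-dimensional (ultraregular)---this is implicit in Van Rooij's framework but should be stated; and the assertion that every character of $C(X)$ is a point evaluation is the substantive step, which in Van Rooij is handled via the correspondence between maximal ideals and points of the compact zero-dimensional space. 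With those two caveats made explicit, your argument matches what one finds in the cited reference.
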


\begin{remark}
As a consequence of this corollary, we have that $Sp\left(  \mathcal{S}%
_{Y}\left(  c_{0}\right)  \right)  $ is compact and, for every $S\in
\mathcal{S}_{Y}\left(  c_{0}\right)  ,$
\[
\left\Vert S\right\Vert =\sup_{i\in\mathbb{N}}\left\Vert S\left(
e_{i}\right)  \right\Vert =\left\Vert S\right\Vert _{sp}.
\]
The compactness of $Sp\left(  \mathcal{S}_{Y}\left(  c_{0}\right)  \right)  $
guarantees that the spectrum of $S$ is compact in $\mathbb{K}$, since
$\mathcal{S}_{Y}\left(  c_{0}\right)  $ has a unit.
\end{remark}

\section{The subalgebra $\mathcal{L}_{T}$}

Let us fix a compact and self-adjoint operator $T$; hence $I+T\in
\mathcal{S}_{Y}\left(  c_{0}\right)  .$ We shall denote by $\mathcal{L}_{T}$
the closure of the algebra spanned by $\left\{  I,T\right\}  ,$ that is,
\[
\mathcal{L}_{T}=\overline{\left\langle \left\{  I,T\right\}  \right\rangle
}=\left\{  \sum_{n=0}^{\infty}\alpha_{n}T^{n}:\left\Vert \alpha_{n}%
T^{n}\right\Vert \rightarrow0\right\}  .
\]
Clearly, $\mathcal{L}_{T}$ is a C-algebra since it is closed Banach subalgebra
of $\mathcal{S}_{Y}\left(  c_{0}\right)  $ (Cor. 6.13, $\left[  6\right]  $).

This condition guarantees that $Sp\left(  \mathcal{L}_{T}\right)  $ is compact
and, for each $H\in\mathcal{L}_{T},$%
\[
\left\Vert H\right\Vert =\sup_{i\in I}\left\Vert H\left(  e_{i}\right)
\right\Vert =\left\Vert H\right\Vert _{sp}.
\]
On the other hand, since the operators norm in $\mathcal{S}_{Y}\left(
c_{0}\right)  $ is power multiplicative, such property is inherited by
$\mathcal{L}_{T}.$ Thus, for any $H\in\mathcal{L}_{T},$ we have
\[
\left\Vert H^{n}\right\Vert =\left\Vert H\right\Vert ^{n}.
\]

Under the conditions that $\mathcal{L}_{T}$ is a C-algebra and $Sp\left(
\mathcal{L}_{T}\right)  $ is compact, we conclude that $\mathcal{L}_{T}$ is
isometrically isomorphic to the space of all continuous functions $C\left(
Sp\left(  \mathcal{L}_{T}\right)  \right)  $ provided by the supremum norm,
that is, there exists an isomorphism of algebras $\Psi$ which is, at the same
time, an isometry from $\mathcal{L}_{T}$ onto $C\left(  Sp\left(
\mathcal{L}_{T}\right)  \right)  .$ Thus, if $H=\sum_{n=0}^{\infty}\alpha
_{n}T^{n},$ then $\left\Vert H\right\Vert =\left\Vert \Psi\left(  H\right)
\right\Vert _{\infty}.$

Now, since $T$ is compact and self-adjoint$,$ there exists $\lambda=\left(
\lambda_{i}\right)  _{i\in\mathbb{N}}\in c_{0}$ for which
\[
T=T_{\lambda}=\sum_{n=1}^{\infty}\lambda_{i}P_{i}\text{, }\left\Vert
T\right\Vert =\left\Vert \lambda\right\Vert \text{ and }T\left(  y^{\left(
i\right)  }\right)  =\lambda_{i}y^{\left(  i\right)  }\text{ for}\ y^{\left(
i\right)  }\in Y
\]

Let us denote by $\sigma\left(  T\right)  =\left\{  \lambda_{0},\lambda
_{1},\ldots\right\}  $ with $\lambda_{0}=0,$ the collection of all eigenvalues
of $T.$ We define the homomorphism of algebra $\phi_{i}:\left\langle \left\{
I,T\right\}  \right\rangle \rightarrow\mathbb{K}$ by%
\[
\phi_{i}\left(  T\right)  =\lambda_{i}.
\]
Let $H=\sum_{n=0}^{k}\alpha_{n}T_{\lambda}^{n}\in\left\langle \left\{
I,T\right\}  \right\rangle .$ Since%
\begin{align*}
\left\vert \phi_{i}\left(  H\right)  \right\vert  &  =\left\vert \alpha
_{0}+\sum_{n=1}^{k}\alpha_{n}\lambda_{i}^{n}\right\vert \leq\max\left\{
\left\vert \alpha_{0}\right\vert ,\left\vert \sum_{n=1}^{k}\alpha_{n}%
\lambda_{i}^{n}\right\vert \right\} \\
&  \leq\max\left\{  \left\vert \alpha_{0}\right\vert ,\left\Vert \sum
_{n=1}^{k}\alpha_{n}\lambda^{n}\right\Vert \right\}  =\left\Vert \left(
\alpha_{0},\sum_{n=1}^{k}\alpha_{n}\lambda^{n}\right)  \right\Vert _{c_{0}%
^{+}}\\
&  =\left\Vert \alpha_{0}I+T_{\sum_{n=1}^{k}\alpha_{n}\lambda^{n}}\right\Vert
=\left\Vert \sum_{n=0}^{k}\alpha_{n}T_{\lambda}^{n}\right\Vert =\left\Vert
H\right\Vert
\end{align*}
we get that $\phi_{i}$ is continuous.

Since $\left\{  I,T,T^{2},\ldots\right\}  $ also generates the closed algebra
$\mathcal{L}_{T}$ and for $H=\sum_{n=0}^{\infty}\alpha_{n}T^{n}\in
\mathcal{L}_{T},$ we have%
\begin{align*}
\left\vert \alpha_{n}\lambda_{i}^{n}\right\vert  &  =\left\vert \alpha
_{n}\right\vert \left\vert \lambda_{i}\right\vert ^{n}\leq\left\vert
\alpha_{n}\right\vert \left\Vert \lambda\right\Vert ^{n}\\
&  =\left\vert \alpha_{n}\right\vert \left\Vert T\right\Vert ^{n}=\left\vert
\alpha_{n}\right\vert \left\Vert T^{n}\right\Vert =\left\Vert \alpha_{n}%
T^{n}\right\Vert \rightarrow0,
\end{align*}
and then $\phi_{i}$ can be continuously extended to $\mathcal{L}_{T}.$

From this, the following function%
\[
\sigma\left(  T\right)  \overset{\Gamma}{\rightarrow}Sp\left(  \mathcal{L}%
_{T}\right)  ;\ \lambda_{i}\rightarrow\Gamma\left(  \lambda_{i}\right)
=\phi_{i}.
\]
is well defined. We claim that $\Gamma$ is bijective. Clearly, it is an
injective function. To prove the surjective condition, we will start with the
following result:

\begin{theorem}
If $z\notin\sigma\left(  T\right)  ,$ then $zI-T$ is invertible in
$\mathcal{S}_{Y}\left(  c_{0}\right)  .$

\begin{proof}
For $y\in R\left(  zI-T\right)  ,$ there exists $x\in c_{0}$ such that%
\[
\left(  zI-T\right)  \left(  x\right)  =y
\]
Since $z\notin\sigma\left(  T\right)  ,$ we can solve this equation for $x$
and get%
\begin{equation}
x=\frac{1}{z}y+\frac{1}{z}Tx=\frac{1}{z}y+\frac{1}{z}\sum_{i=1}^{\infty
}\lambda_{i}\frac{\left\langle x,y^{\left(  i\right)  }\right\rangle
}{\left\langle y^{\left(  i\right)  },y^{\left(  i\right)  }\right\rangle
}y^{\left(  i\right)  }%
\end{equation}
Applying the continuous functional $\left\langle \cdot,y^{\left(  k\right)
}\right\rangle $ above, we have%
\begin{align*}
\left\langle x,y^{\left(  k\right)  }\right\rangle  &  =\left\langle \frac
{1}{z}y+\frac{1}{z}\sum_{i=1}^{\infty}\lambda_{i}\frac{\left\langle
x,y^{\left(  i\right)  }\right\rangle }{\left\langle y^{\left(  i\right)
},y^{\left(  i\right)  }\right\rangle }y^{\left(  i\right)  },y^{\left(
k\right)  }\right\rangle \\
&  =\frac{1}{z}\left\langle y,y^{\left(  k\right)  }\right\rangle +\frac{1}%
{z}\frac{\lambda_{k}}{z}\left\langle x,y^{\left(  k\right)  }\right\rangle
\end{align*}
Now, solving the last equation for $\left\langle x,y^{\left(  k\right)
}\right\rangle ,$ we obtain%
\begin{align*}
\left(  1-\frac{\lambda_{k}}{z}\right)  \left\langle x,y^{\left(  k\right)
}\right\rangle  &  =\frac{1}{z}\left\langle y,y^{\left(  k\right)
}\right\rangle \\
\left\langle x,y^{\left(  k\right)  }\right\rangle  &  =\frac{1}{z-\lambda
_{k}}\left\langle y,y^{\left(  k\right)  }\right\rangle
\end{align*}
Note that the sequence%
\[
\left(  \frac{\lambda_{k}}{z-\lambda_{k}}\right)  _{k\in\mathbb{N}}%
\]
is an element of $c_{0}.$ In fact, since $z\notin\sigma\left(  T\right)  ,$ we
have that $\left\vert z\right\vert >0$ and, therefore, for a given
$0<\epsilon<1$, there exists $i_{0}\in\mathbb{N}\,$\ such that%
\[
i\geq i_{0}\Rightarrow\left\vert \lambda_{i}\right\vert <\epsilon\left\vert
z\right\vert .
\]
Thus,%
\[
i\geq i_{0}\Rightarrow\left\vert \frac{\lambda_{i}}{z-\lambda_{i}}\right\vert
=\frac{\left\vert \lambda_{i}\right\vert }{\left\vert z\right\vert }%
<\epsilon.
\]
Now, replacing in $\left(  3.1\right)  ,$ we get%
\begin{align*}
x  &  =\frac{1}{z}y+\frac{1}{z}\sum_{i=1}^{\infty}\frac{\lambda_{i}}%
{z-\lambda_{i}}\frac{\left\langle y,y^{\left(  i\right)  }\right\rangle
}{\left\langle y^{\left(  i\right)  },y^{\left(  i\right)  }\right\rangle
}y^{\left(  i\right)  }\\
&  =\frac{1}{z}y+\frac{1}{z}\sum_{i=1}^{\infty}\frac{\lambda_{i}}%
{z-\lambda_{i}}P_{i}\left(  y\right)
\end{align*}
Although $y$ belongs to $R\left(  zI-T\right)  ,$ the last expression holds
for any $y\in c_{0}.$ Thus, if we denote by
\[
R_{z}\left(  T\right)  \left(  y\right)  =\frac{1}{z}y+\frac{1}{z}\sum
_{i=1}^{\infty}\frac{\lambda_{i}}{z-\lambda_{i}}P_{i}\left(  y\right)  ,
\]
then $R_{z}\left(  T\right)  \left(  \cdot\right)  \in\mathcal{S}_{Y}\left(
c_{0}\right)  ,$ since $\sum_{i=1}^{\infty}\frac{\lambda_{i}}{z-\lambda_{i}%
}P_{i}\left(  \cdot\right)  $ is compact and self-adjoint operator.\newline
Let us show that , effectively, $R_{z}\left(  T\right)  \left(  \cdot\right)
$ is the inverse operator of $zI-T:$%
\begin{align*}
&  \left.  \left[  \left(  zI-T\right)  \circ R_{z}\left(  T\right)  \right]
\left(  y\right)  =\right.  \left(  zI-T\right)  \left(  \frac{1}{z}y+\frac
{1}{z}\sum_{i=1}^{\infty}\frac{\lambda_{i}}{z-\lambda_{i}}P_{i}\left(
y\right)  \right) \\
&  =y+\sum_{i=1}^{\infty}\frac{\lambda_{i}}{z-\lambda_{i}}P_{i}\left(
y\right)  -\frac{1}{z}\sum_{i=1}^{\infty}\lambda_{i}P_{i}\left(  y\right)
-\frac{1}{z}\sum_{i=1}^{\infty}\frac{\lambda_{i}^{2}}{z-\lambda_{i}}%
P_{i}\left(  y\right)  ;\ \ T\left(  P_{i}\left(  y\right)  \right)
=\lambda_{i}P_{i}\left(  y\right) \\
&  =y+\sum_{i=1}^{\infty}\left[  \underset{=0}{\underbrace{\frac{\lambda_{i}%
}{z-\lambda_{i}}-\frac{\lambda_{i}}{z}-\frac{\lambda_{i}^{2}}{z\left(
z-\lambda_{i}\right)  }}}\right]  P_{i}\left(  y\right)  =y=I\left(  y\right)
\end{align*}
In the other direction, since $P_{j}\left(  P_{i}\left(  x\right)  \right)
=P_{i}\left(  x\right)  ,$ if $j=i,$ otherwise it is $0,$ we have
\begin{align*}
&  \left.  \left[  R_{z}\left(  T\right)  \circ\left(  zI-T\right)  \right]
\left(  x\right)  =\right.  zR_{z}\left(  T\right)  \left(  x\right)
-R_{z}\left(  T\right)  \left(  Tx\right) \\
&  =x+\sum_{i=1}^{\infty}\frac{\lambda_{i}}{z-\lambda_{i}}P_{i}\left(
x\right)  -\sum_{i=1}^{\infty}\lambda_{i}R_{z}\left(  T\right)  \left(
P_{i}\left(  x\right)  \right) \\
&  =x+\sum_{i=1}^{\infty}\frac{\lambda_{i}}{z-\lambda_{i}}P_{i}\left(
x\right)  -\sum_{i=1}^{\infty}\lambda_{i}\left[  \frac{1}{z}P_{i}\left(
x\right)  +\frac{1}{z}\sum_{j=1}^{\infty}\frac{\lambda_{j}}{z-\lambda_{j}%
}P_{j}\left(  P_{i}\left(  x\right)  \right)  \right] \\
&  =x+\sum_{i=1}^{\infty}\left[  \underset{=0}{\underbrace{\frac{\lambda_{i}%
}{z-\lambda_{i}}-\frac{\lambda_{i}}{z}-\frac{\lambda_{i}^{2}}{z\left(
z-\lambda_{i}\right)  }}}\right]  P_{i}\left(  x\right)  =x=I\left(  x\right)
\end{align*}
Therefore, $R_{z}\left(  T\right)  =\left(  zI-T\right)  ^{-1}\in
\mathcal{S}_{Y}\left(  c_{0}\right)  .$
\end{proof}
\end{theorem}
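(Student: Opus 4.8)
The plan is to produce an explicit two-sided inverse, exploiting the spectral decomposition $T = \sum_{i=1}^{\infty} \lambda_i P_i$. Since $T$ acts diagonally on the system of normal projections $\{P_i\}$ (indeed $T P_i = \lambda_i P_i$ because $T(y^{(i)}) = \lambda_i y^{(i)}$), I expect its resolvent to be diagonal in the same system. Formally solving $(zI - T)x = y$ suggests the candidate
\[
R_z(y) = \frac{1}{z}\,y + \frac{1}{z}\sum_{i=1}^{\infty} \frac{\lambda_i}{z - \lambda_i}\,P_i(y),
\]
and the whole proof reduces to (a) showing this operator lies in $\mathcal{S}_Y(c_0)$ and (b) checking that it inverts $zI - T$ on both sides.

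To arrive at the formula honestly, I would first take $y \in R(zI - T)$, pick $x$ with $(zI - T)x = y$, and rewrite the equation as $x = \tfrac{1}{z} y + \tfrac{1}{z} Tx$. Applying the continuous functional $\langle \cdot, y^{(k)}\rangle$ and using the orthonormality of $Y$ together with $T(y^{(i)}) = \lambda_i y^{(i)}$ collapses the sum to a single term, giving $(1 - \lambda_k/z)\langle x, y^{(k)}\rangle = \tfrac{1}{z}\langle y, y^{(k)}\rangle$; since $z \neq 0$ and $z \neq \lambda_k$, this solves to $\langle x, y^{(k)}\rangle = \tfrac{1}{z - \lambda_k}\langle y, y^{(k)}\rangle$. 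Substituting back reproduces the displayed $R_z$, and one observes that this expression makes sense for every $y \in c_0$, not just those in the range of $zI-T$.

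The main obstacle, and the only place where the hypothesis $z \notin \sigma(T)$ does real work, is showing that the coefficient sequence $\mu = \bigl(\tfrac{\lambda_i}{z - \lambda_i}\bigr)_{i \in \mathbb{N}}$ belongs to $c_0$; for then $\sum_{i} \tfrac{\lambda_i}{z-\lambda_i}P_i = T_\mu$ is compact and self-adjoint, and hence $R_z = \tfrac{1}{z} I + \tfrac{1}{z} T_\mu \in \mathcal{S}_Y(c_0)$. Here the ultrametric makes the estimate clean: since $z \notin \sigma(T)$ forces $z \neq 0$, we have $|z| > 0$, and because $\lambda_i \to 0$ there is $i_0$ with $|\lambda_i| < \epsilon|z| < |z|$ for $i \geq i_0$; the strong triangle equality then gives $|z - \lambda_i| = |z|$, so $\bigl|\tfrac{\lambda_i}{z-\lambda_i}\bigr| = \tfrac{|\lambda_i|}{|z|} < \epsilon$, proving $\mu \in c_0$.

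It remains to verify that $R_z$ is genuinely a two-sided inverse, which is a direct computation. For $(zI - T)\circ R_z$ I would expand using $T P_i = \lambda_i P_i$; the identity term survives and the coefficient of each $P_i(y)$ is $\tfrac{\lambda_i}{z - \lambda_i} - \tfrac{\lambda_i}{z} - \tfrac{\lambda_i^2}{z(z - \lambda_i)}$, which simplifies to $0$, leaving exactly $y$. For $R_z \circ (zI - T)$ the same scalar identity appears after invoking the relation $P_j P_i = \delta_{ij}P_i$, so this composition also reduces to the identity. Hence $R_z = (zI - T)^{-1} \in \mathcal{S}_Y(c_0)$, as claimed.
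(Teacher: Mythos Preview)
Your proposal is correct and follows essentially the same route as the paper's own proof: derive the candidate resolvent $R_z = \tfrac{1}{z}I + \tfrac{1}{z}T_\mu$ by solving $(zI-T)x=y$ coordinatewise via the functionals $\langle\cdot,y^{(k)}\rangle$, use the ultrametric equality $|z-\lambda_i|=|z|$ for large $i$ to place $\mu=(\lambda_i/(z-\lambda_i))_i$ in $c_0$, and then verify both compositions reduce to the identity through the same scalar cancellation. There is no substantive difference in strategy or in the key estimates.
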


\begin{corollary}
If $z\notin\sigma\left(  T\right)  ,$ then $R_{z}\left(  T\right)
\in\mathcal{L}_{T}$.

\begin{proof}
We already know that $\mathcal{L}_{T}$ is a C-algebra with unit; hence, by Th.
6.10 in $\left[  6\right]  $, we have that
\[
\left(  zI-T\right)  ^{-1}\in\overline{\mathbb{K}\left[  zI-T\right]
}=\overline{\left\langle \left\{  zI-T,\left(  zI-T\right)  ^{2},\left(
zI-T\right)  ^{3},\cdots\right\}  \right\rangle }%
\]
Now, since $\left(  zI-T\right)  ^{n}\in\mathcal{L}_{T}$ for any
$n\in\mathbb{N},$ we have that $\overline{\mathbb{K}\left[  zI-T\right]  }$ is
a subalgebra of $\mathcal{L}_{T}.$ This proves that $R_{z}\left(  T\right)
\in\mathcal{L}_{T}.$
\end{proof}
\end{corollary}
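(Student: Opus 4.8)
The plan is to show that $R_z(T)$, which the previous theorem already produces as the inverse of $zI-T$ \emph{inside} $\mathcal{S}_Y(c_0)$, in fact lies in the smaller closed subalgebra $\mathcal{L}_T=\overline{\langle\{I,T\}\rangle}$. Since $\mathcal{L}_T$ is by definition the norm-closure of the polynomials in $I$ and $T$, it suffices to exhibit $R_z(T)$ as a norm-limit of such polynomials; equivalently, to show that the inverse of $zI-T$ does not ``escape'' the closed subalgebra generated by $zI-T$ itself. The only real content is thus a spectral-permanence statement: invertibility known in the big algebra must descend to the singly generated subalgebra.

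The route I would take is the abstract one, leaning on the C-algebra structure. We have already established that $\mathcal{S}_Y(c_0)$ is a C-algebra with unit, and that $zI-T$ is invertible in $\mathcal{S}_Y(c_0)$. The structure theory of such algebras (Th. 6.10, [6]) then places the inverse of any invertible element in the closed subalgebra it generates, so that
\[
R_z(T)=(zI-T)^{-1}\in\overline{\mathbb{K}[zI-T]}=\overline{\langle\{(zI-T)^n:n\in\mathbb{N}\}\rangle}.
\]
It remains to observe that $zI-T\in\mathcal{L}_T$, whence every power $(zI-T)^n$ lies in $\mathcal{L}_T$ because $\mathcal{L}_T$ is a subalgebra; since $\mathcal{L}_T$ is moreover closed, the entire closed span $\overline{\mathbb{K}[zI-T]}$ is contained in $\mathcal{L}_T$. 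Chaining the two inclusions gives $R_z(T)\in\mathcal{L}_T$. As a by-product this also shows that $zI-T$ is already invertible \emph{within} $\mathcal{L}_T$, i.e. invertibility is inherited by the C-subalgebra.

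As a concrete sanity check one can try to build the approximating polynomials by hand from the explicit formula $R_z(T)=\tfrac1z I+\tfrac1z\sum_i \tfrac{\lambda_i}{z-\lambda_i}P_i$. On the $i$-th eigenspace $zI-T$ acts by the scalar $z-\lambda_i$, and the geometric series $\sum_{n\ge0} z^{-(n+1)}\lambda_i^n$ converges to $\tfrac1{z-\lambda_i}$ precisely when $|\lambda_i|<|z|$; since $\lambda_i\to0$ and $z\neq0$, this holds for all but finitely many $i$, and those terms reassemble into the Neumann series $\sum_{n\ge0}z^{-(n+1)}T^n$. I expect the main obstacle to be exactly the finitely many indices with $|\lambda_i|\ge|z|$, where this series diverges, so that a single power series in $T$ cannot reproduce $R_z(T)$ uniformly over all eigenspaces. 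Circumventing this by a finite polynomial correction on the large eigenvalues is possible but fiddly, which is why the abstract C-algebra argument above is the cleaner path: it supplies the approximating polynomials in $zI-T$ at one stroke, without any case-split on the size of the $\lambda_i$.
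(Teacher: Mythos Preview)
Your proposal is correct and follows essentially the same route as the paper: invoke Th.~6.10 of [6] to place $(zI-T)^{-1}$ inside $\overline{\mathbb{K}[zI-T]}$, then observe that this closed subalgebra is contained in $\mathcal{L}_T$ because $\mathcal{L}_T$ is a closed subalgebra containing $zI-T$. The one cosmetic difference is that you apply Th.~6.10 in the ambient C-algebra $\mathcal{S}_Y(c_0)$, where invertibility of $zI-T$ has already been established by the preceding theorem, while the paper names $\mathcal{L}_T$ as the C-algebra in which the theorem is invoked; your phrasing is arguably the more careful one, since at that point invertibility is only known in $\mathcal{S}_Y(c_0)$.
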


\begin{corollary}
The function $\sigma\left(  T\right)  \overset{\Gamma}{\rightarrow}Sp\left(
\mathcal{L}_{T}\right)  $ is bijective.

\begin{proof}
We already know that $\Gamma$ is injective. If $\phi\in Sp\left(
\mathcal{L}_{T}\right)  ,$ then $\phi\left(  T\right)  =z,$ for a
$z\in\mathbb{K}.$ Suppose that $z\notin\sigma\left(  T\right)  ,$ hence $zI-T$
has an inverse and, for the previous theorem, $\left(  zI-T\right)
^{-1}=R_{z}\left(  T\right)  \in\mathcal{L}_{T}.$ Since the function $\phi$ is
a homomorphism of algebras with unit, we have%
\[
1=\phi\left(  I\right)  =\phi\left(  \left(  zI-T\right)  ^{-1}\circ\left(
zI-T\right)  \right)  =\phi\left(  \left(  zI-T\right)  ^{-1}\right)
\phi\left(  zI-T\right)  ,
\]
but, by the linearity of $\phi,$ the factor $\phi\left(  zI-T\right)  $ is
null$,$ which is a contradiction. Such contradiction is coming from the fact
that $z\notin\sigma\left(  T\right)  .$\newline Thus, if $\phi\in Sp\left(
\mathcal{L}_{T}\right)  ,$ then there exists $\mu\in\sigma\left(  T\right)  $
such that $\phi=\phi_{\mu}$ and therefore $\Gamma$ is bijective.
\end{proof}
\end{corollary}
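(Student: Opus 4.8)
The plan is to reduce the bijectivity of $\Gamma$ to two observations: that a homomorphism in $Sp(\mathcal{L}_T)$ is completely pinned down by its value on $T$, and that this value is forced to lie in $\sigma(T)$. Injectivity is immediate from the construction, since $\Gamma(\lambda_i)=\phi_i$ with $\phi_i(T)=\lambda_i$, so two distinct eigenvalues already produce homomorphisms that differ on the single element $T$. The entire content therefore sits in surjectivity, which I would attack by a resolvent argument.

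For surjectivity I would take an arbitrary $\phi\in Sp(\mathcal{L}_T)$ and put $z=\phi(T)\in\mathbb{K}$. The key step is to show $z\in\sigma(T)$, and I would argue by contradiction. Assuming $z\notin\sigma(T)$, the preceding theorem furnishes an inverse of $zI-T$ in $\mathcal{S}_Y(c_0)$, and the corollary just before upgrades this to $(zI-T)^{-1}=R_z(T)\in\mathcal{L}_T$, which is exactly what makes it legitimate to evaluate $\phi$ on the inverse. Applying $\phi$ to the factorization $I=R_z(T)\circ(zI-T)$ and using multiplicativity together with $\phi(zI-T)=z\,\phi(I)-\phi(T)=z-z=0$ forces $1=\phi(I)=\phi(R_z(T))\cdot 0=0$, a contradiction. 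Hence $z\in\sigma(T)$, say $z=\lambda_\mu$.

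It then remains to promote $\phi(T)=\phi_\mu(T)$ to the equality $\phi=\phi_\mu$. Here I would invoke $\mathcal{L}_T=\overline{\langle\{I,T\}\rangle}$: any homomorphism into $\mathbb{K}$ sends $I$ to $1$ and is linear and multiplicative, so it is entirely determined on the dense subalgebra $\langle\{I,T\}\rangle$ by its value on $T$. Since every element of the spectrum is automatically continuous (norm at most $1$, as recorded in the spectrum remark), the agreement of $\phi$ and $\phi_\mu$ on this dense subalgebra extends to all of $\mathcal{L}_T$, giving $\phi=\phi_\mu=\Gamma(\lambda_\mu)$ and hence surjectivity.

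I expect the main obstacle to be precisely the membership $R_z(T)\in\mathcal{L}_T$, rather than merely in the ambient algebra $\mathcal{S}_Y(c_0)$: the homomorphism $\phi$ lives only on $\mathcal{L}_T$, so if the resolvent escaped this subalgebra one could not evaluate $\phi$ on $(zI-T)^{-1}$ and the contradiction would collapse. This is exactly the content of the preceding corollary, obtained from the fact that $\mathcal{L}_T$ is a $C$-algebra in which inverses of elements of $\mathbb{K}[zI-T]$ remain inside the closed generated subalgebra, so the argument closes.
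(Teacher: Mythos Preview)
Your proposal is correct and follows essentially the same route as the paper: both argue surjectivity by taking $z=\phi(T)$, deriving a contradiction from $z\notin\sigma(T)$ via the membership $R_z(T)\in\mathcal{L}_T$ and the computation $\phi(zI-T)=0$, and then concluding $\phi=\phi_\mu$. Your write-up is in fact a bit more careful than the paper's, since you explicitly justify the last implication $\phi(T)=\phi_\mu(T)\Rightarrow\phi=\phi_\mu$ by density of $\langle\{I,T\}\rangle$ and continuity of spectrum elements, a step the paper leaves implicit.
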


\begin{remark}
We already have identified $Sp\left(  \mathcal{L}_{T}\right)  $ with
$\sigma\left(  T\right)  $ through the bijective function $\Gamma$. Let us
consider the subspace topology of $\mathbb{K}$ on $\sigma\left(  T\right)  .$
Also, note that $\sigma\left(  T\right)  $ is compact with the subspace
topology. \newline Now, we affirm that $\Upsilon=\Gamma^{-1}$ is continuous.
In fact, if $\phi_{\alpha}\rightarrow\phi$ in the induced topology by the
product topology in $\mathbb{K}^{\mathcal{L}_{T}},$ then%
\[
\phi_{\alpha}\left(  H\right)  \rightarrow\phi\left(  H\right)
\]
for all $H\in\mathcal{L}_{T},$ in particular,
\[
\phi_{\alpha}\left(  T\right)  \rightarrow\phi\left(  T\right)
\]
or, equivalently,
\[
\Upsilon\left(  \phi_{\alpha}\right)  \rightarrow\Upsilon\left(  \phi\right)
\]
Now, since $\Upsilon$ is bijective and continuous, $Sp\left(  \mathcal{L}%
_{T}\right)  $ is compact and $\sigma\left(  T\right)  $ is a Hausdorff space,
we conclude that $\Upsilon$ is a homeomorphism.\newline By these facts and by
the uniqueness of $X$ (up to homeomorphism) for which $\mathcal{L}_{T}\cong
C\left(  X\right)  ,$ we have
\[
\mathcal{L}_{T}\cong C\left(  \sigma\left(  T\right)  \right)  .
\]

\end{remark}

From the fact that $Sp\left(  \mathcal{L}_{T}\right)  $ is homeomorphic to
$\sigma\left(  T\right)  ,$
\[
G_{T}:\sigma\left(  T\right)  \rightarrow\mathbb{K};\ \lambda_{i}\rightarrow
G_{T}\left(  \lambda_{i}\right)  =\lambda_{i}\
\]
in other words, $G_{T}=f_{T}$ is the identity map. Thus, if $H=\alpha
_{0}I+\sum_{n=1}^{\infty}\alpha_{n}T^{n}\in\mathcal{L}_{T},$ then
\[
G_{H}:\sigma\left(  T\right)  \rightarrow\mathbb{K};\ \lambda_{i}\rightarrow
G_{H}\left(  \lambda_{i}\right)  =\alpha_{0}+\sum_{n=1}^{\infty}\alpha
_{n}\lambda_{i}^{n}.
\]

Therefore, we can get the well-known Gelfand transformation
\[
G:\mathcal{L}_{T}\rightarrow C\left(  \sigma\left(  T\right)  \right)
;\ H\rightarrow G_{H}%
\]

\begin{proposition}
$G$ is an isometric isomorphism algebra.

\begin{proof}
Clearly, $G$ is an algebra homomorphism. Since $\mathcal{L}_{T}$ satisfies the
condition given by Cor. 6.16, pp. 218, $\left[  6\right]  ,$ we have%
\[
\left\Vert G_{H}\right\Vert _{\infty}=\left\Vert H\right\Vert _{sp}=\left\Vert
H\right\Vert .
\]
Thus, it is enough to prove that $G$ is surjective. By the fact that $G$ is an
algebra homomorphism and the image of $T$ by $G$ is the identity map
$G_{T}=f_{T}$, the collection $\left\{  1,f_{T},f_{T}^{2},\ldots\right\}  $ is
the image of $\left\{  I,T,T^{2},\ldots\right\}  .$ \newline Now, since
$\sigma\left(  T\right)  $ is compact, Kaplanski (Th. 5.28, pp. 191 $\left[
6\right]  $) guarantees that $\left[  \left\{  1,f_{T},f_{T}^{2}%
,\ldots\right\}  \right]  $ is dense in $C\left(  \sigma\left(  T\right)
\right)  .$ Thus, if $f\in C\left(  \sigma\left(  T\right)  \right)  ,$ then
there exists a sequence $\left\{  g_{n}\right\}  _{n\in\mathbb{N}}$ in
$\left[  \left\{  1,f_{T},f_{T}^{2},\ldots\right\}  \right]  $ such that
$f=\lim_{n\rightarrow\infty}g_{n}.$ Now, for each $n\in\mathbb{N},$ there
exists $H_{n}\in\mathcal{L}_{T}$ such that%
\[
G_{H_{n}}=g_{n}.
\]
By the fact that $G$ is an isometry$,$ the sequence $\left(  H_{n}\right)  $
is a Cauchy sequence in $\mathcal{L}_{T}$. Let us denote by $H=\lim
_{n\rightarrow\infty}H_{n}.$ Since $G$ is continuous, we have that
\[
G_{H}=\lim_{n\rightarrow\infty}G_{H_{n}}=\lim_{n\rightarrow\infty}g_{n}=f.
\]

\end{proof}
\end{proposition}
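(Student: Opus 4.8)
The plan is to establish three things: that $G$ is an algebra homomorphism, that it is an isometry (hence injective), and that it is surjective. The first two are quick consequences of what has already been set up, while the surjectivity is where the genuine work lies. That $G$ is linear and multiplicative is immediate: writing $G_H(\lambda_i) = \phi_i(H)$ and recalling that each $\phi_i$ is a homomorphism of algebras, the identities $G_{H+K} = G_H + G_K$ and $G_{HK} = G_H G_K$ follow pointwise, so I would dispose of the homomorphism property in a single line.

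For the isometry I would invoke the spectral-norm identity already available for $\mathcal{L}_T$. Since $\mathcal{L}_T$ is a C-algebra whose spectrum $Sp(\mathcal{L}_T)$ is compact, the Proposition quoted from Narici (Cor.\ 6.16, $[6]$) gives $\|H\|_{sp} = \|H\|$ for every $H \in \mathcal{L}_T$. Having identified $Sp(\mathcal{L}_T)$ with $\sigma(T)$ through the homeomorphism $\Gamma$, I obtain $\|G_H\|_\infty = \sup_{\lambda_i \in \sigma(T)} |G_H(\lambda_i)| = \sup_{\phi \in Sp(\mathcal{L}_T)} |\phi(H)| = \|H\|_{sp} = \|H\|$. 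Thus $G$ preserves the norm, and in particular it is injective.

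The surjectivity is the \emph{main obstacle}. The image $G(\mathcal{L}_T)$ is a subalgebra of $C(\sigma(T))$ containing the constant function $1$ (the image of $I$) and the identity function $f_T = G_T$ (the image of $T$), and therefore every polynomial in $f_T$. Because $f_T$ is the inclusion $\sigma(T) \hookrightarrow \mathbb{K}$, it separates the points of $\sigma(T)$, and $\sigma(T)$ is compact. I would therefore appeal to the non-archimedean Stone--Weierstrass (Kaplansky) density theorem to conclude that the polynomial algebra $[\{1, f_T, f_T^2, \ldots\}]$ is dense in $C(\sigma(T))$. The delicate point is precisely to verify that the hypotheses of this density theorem are met, namely the compactness of $\sigma(T)$ and the separation of points by the generating function $f_T$; both hold here since $f_T$ is injective on the compact set $\sigma(T)$.

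Finally, given $f \in C(\sigma(T))$, I would choose polynomials $g_n$ in $f_T$ with $g_n \to f$ uniformly. Each $g_n = \sum_k c_k f_T^{\,k}$ is the image under $G$ of $H_n = \sum_k c_k T^k \in \mathcal{L}_T$, so $G_{H_n} = g_n$. Because $G$ is an isometry, $\|H_n - H_m\| = \|g_n - g_m\|_\infty \to 0$, so $(H_n)$ is a Cauchy sequence in the complete space $\mathcal{L}_T$ and converges to some $H$. Continuity of $G$ then gives $G_H = \lim_{n} G_{H_n} = \lim_{n} g_n = f$, which establishes surjectivity and completes the proof that $G$ is an isometric algebra isomorphism.
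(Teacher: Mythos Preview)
Your proof is correct and follows essentially the same route as the paper: the homomorphism property is immediate, the isometry is obtained from Cor.\ 6.16 of $[6]$, and surjectivity comes from Kaplansky's density theorem applied to the polynomial algebra in $f_T$ together with the isometry-plus-completeness pull-back argument. You spell out a bit more explicitly why Kaplansky applies (compactness of $\sigma(T)$ and that $f_T$ separates points), but the structure and all the key steps coincide with the paper's proof.
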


\section{An integral.}

By the previous section, there exists an algebra isometric isomorphism
\break$\Psi=G^{-1}:C\left(  \sigma\left(  T\right)  \right)  \rightarrow
\mathcal{L}_{T}.$ Let us denote by $\Omega\left(  \sigma\left(  T\right)
\right)  $ the Boolean ring of all clopen subsets of $\sigma\left(  T\right)
.$

For a $C\subset\sigma\left(  T\right)  ,$ the function $\eta_{C}$ denotes the
characteristic function of $C.$ If $C_{1},C_{2}\subset\sigma\left(  T\right)
,$ then%
\begin{align*}
\eta_{C_{1}}\cdot\eta_{C_{2}}  &  =\eta_{C_{1}\cap C_{2}};\ \ \eta_{C}%
^{2}=\eta_{C}\\
\eta_{C_{1}}+\eta_{C_{2}}  &  =\eta_{C_{1}\cup C_{2}},\text{ if }C_{1}\cap
C_{2}=\varnothing.
\end{align*}
Of course, $\eta_{C}$ is continuous if, and only if, $C\in\Omega\left(
\sigma\left(  T\right)  \right)  .$

Now, since $\Psi$ is a homomorphism of algebras, we have
\[
\Psi\left(  \eta_{C}\right)  =\Psi\left(  \eta_{C}^{2}\right)  =\Psi\left(
\eta_{C}\right)  ^{2}.
\]
In other words, $\Psi\left(  \eta_{C}\right)  $ is a projection, even more, if
$C\in\Omega\left(  \sigma\left(  T\right)  \right)  \setminus\left\{
\varnothing\right\}  ,$ then $\Psi\left(  \eta_{C}\right)  $ is a non-null
projection in $\mathcal{L}_{T}.$

On the other hand, we know that the linear hull of $\left\{  \eta_{C}%
:C\in\Omega\left(  \sigma\left(  T\right)  \right)  \right\}  $ is dense in
$C\left(  \sigma\left(  T\right)  \right)  ;$ hence, for a given $\epsilon>0$
and for $f\in C\left(  \sigma\left(  T\right)  \right)  ,$ there exists a
finite clopen partition $\left\{  C_{k}:k=1,\ldots,n\right\}  $ of
$\sigma\left(  T\right)  $ and a finite collection of scalars $\left\{
\alpha_{k}:k=1,\ldots,n\right\}  $ such that
\begin{equation}
\left\Vert f-\sum_{k=1}^{n}\alpha_{k}\eta_{C_{k}}\right\Vert _{\infty}%
=\sup_{x\in\sigma\left(  T\right)  }\left\vert f\left(  x\right)  -\sum
_{k=1}^{n}\alpha_{k}\eta_{C_{k}}\left(  x\right)  \right\vert <\epsilon
\end{equation}
Without lost of generality, we can assume that, for each $k=1,\ldots,n,$ there
exists $x_{k}\in\sigma\left(  T\right)  $ such that
\[
\left\Vert f-\sum_{k=1}^{n}\alpha_{k}\eta_{C_{k}}\right\Vert _{\infty}%
=\sup_{x\in\sigma\left(  T\right)  }\left\vert f\left(  x\right)  -\sum
_{k=1}^{n}f\left(  x_{k}\right)  \eta_{C_{k}}\left(  x\right)  \right\vert
<\epsilon
\]
Using the isometry of $\Psi,$ we have%
\[
\left\Vert \Psi\left(  f\right)  -\sum_{k=1}^{n}f\left(  x_{k}\right)
\Psi\left(  \eta_{C_{k}}\right)  \right\Vert <\epsilon
\]
If we denote by $E_{C_{k}}$ the corresponding projection $\Psi\left(
\eta_{C_{k}}\right)  ,$ then
\[
\left\Vert \Psi\left(  f\right)  -\sum_{k=1}^{n}f\left(  x_{k}\right)
E_{C_{k}}\right\Vert <\epsilon
\]
This also shows that the space%
\[
\left\langle \left\{  E\in\mathcal{L}_{T}:E^{2}=E\right\}  \right\rangle
\]
is dense in $\mathcal{L}_{T}.$

Note that these subsets can be classified as follow: the first type (or type
1) are those finite subsets of $\left\{  \lambda_{n}:n\geq1\right\}  $ and the
second type (or type 2) are those subsets of the form $\sigma\left(  T\right)
\setminus C$ where $C$ is a finite subset of $\left\{  \lambda_{n}%
:n\geq1\right\}  .$

Let us consider the following set-function:%
\[
m_{T}:\Omega\left(  \sigma\left(  T\right)  \right)  \rightarrow
\mathcal{L}_{T};\ \ C\rightarrow m_{T}\left(  C\right)  =\Psi\left(  \eta
_{C}\right)  =E_{C}.
\]
Note that if $C=\left\{  \lambda_{i_{1}},\lambda_{i_{2}},\ldots,\lambda
_{i_{n}}\right\}  \in\Omega\left(  \sigma\left(  T\right)  \right)  ,$ then
\[
m_{T}\left(  C\right)  =\sum{}_{k=1}^{n}E_{i_{k}},\text{\ where }E_{i_{k}%
}=E_{\left\{  \lambda_{i_{k}}\right\}  }%
\]
or, if $C=\sigma\left(  T\right)  \setminus\left\{  \lambda_{i_{1}}%
,\lambda_{i_{2}},\ldots,\lambda_{i_{n}}\right\}  ,$ then
\[
m_{T}\left(  C\right)  =I-\sum{}_{k=1}^{n}E_{i_{k}}.
\]
On the other hand, if $\mathcal{C}=\left\{  C_{\mu}\right\}  _{\mu\in I}$ is
shrinking of $\Omega\left(  \sigma\left(  T\right)  \right)  $ and $\cap
_{\mu\in I}C_{\mu}=\varnothing,$ then there exists $\mu_{0}\in I$ such that
for $\mu\geq$ $\mu_{0},$ $C_{\mu}=\varnothing.$

Therefore, we can prove that $m_{T}$ satisfies:

\begin{enumerate}
\item If $\left\{  C_{k}:k=1,\ldots,n\right\}  \subset\Omega\left(
\sigma\left(  T\right)  \right)  $ such that $C_{h}\cap C_{k}=\varnothing$ for
$h\neq k$, then
\[
m_{T}\left(  \cup_{k=1}^{n}C_{k}\right)  =\sum{}_{k=1}^{n}m_{T}\left(
C_{k}\right)  .
\]

\item For all $C\in\Omega\left(  \sigma\left(  T\right)  \right)  ,\ \left\{
m_{T}\left(  B\right)  :B\in\Omega\left(  \sigma\left(  T\right)  \right)
,B\subset C\right\}  $ is bounded.

\item If $\mathcal{C}$ is a collection in $\Omega\left(  \sigma\left(
T\right)  \right)  $ is shrinking and $\cap_{C\in\mathcal{C}}C=\varnothing,$
then \break$\lim_{C\in\mathcal{C}}m_{T}\left(  C\right)  =\theta$.
\end{enumerate}

that is, $m_{T}$ is a finite additive measure.

Take a $C\in\Omega\left(  \sigma\left(  T\right)  \right)  ,\ C\neq
\varnothing$ and denote by $\mathcal{D}_{C}$ the collection of all
\break$\alpha=\left\{  C_{1},C_{2},\ldots,C_{n};x_{1},x_{2},\ldots
,x_{n}\right\}  ,$ where $\left\{  C_{k}:k=1,\ldots,n\right\}  $ is a clopen
partition of $C$ and $x_{k}\in C_{k}.$ We define an order by $\alpha_{1}%
\geq\alpha_{2}$ if, and only if, the clopen partition of $C$ in $\alpha_{1}$
is a refinement of the clopen partition of $C$ in $\alpha_{2}.$ Thus,
$\mathcal{D}_{C},$ with this order, is a directed set.

Now, if $f\in C\left(  \sigma\left(  T\right)  \right)  $ and $\alpha=\left\{
C_{1},C_{2},\ldots,C_{n};x_{1},x_{2},\ldots,x_{n}\right\}  \in\mathcal{D}%
_{C},$ then we define%
\[
\omega_{\alpha}(f,m_{T},C)=\sum_{k=1}^{n}f\left(  x_{k}\right)  m_{T}\left(
C_{k}\right)  =\sum_{k=1}^{n}f\left(  x_{k}\right)  E_{C_{k}}.
\]
For each $f\in C\left(  \sigma\left(  T\right)  \right)  $ and $C\in
\Omega\left(  \sigma\left(  T\right)  \right)  ,$ the continuous function
$f\eta_{C}$ can be reached by a net in $C\left(  \sigma\left(  T\right)
\right)  $ of type%
\[
\left\{  \sum_{k=1}^{n}f\left(  x_{k}\right)  \eta_{C_{k}}\right\}
_{\alpha\in\mathcal{D}_{C}}%
\]
By the isometry of $\Psi,$%
\[
\lim_{\alpha\in\mathcal{D}_{C}}\omega_{\alpha}(f,m_{T},C)=\Psi\left(
f\eta_{C}\right)
\]
Therefore, the operator $\Psi\left(  f\right)  $ can be interpreted as an
integral which we will denote by
\[
\Psi\left(  f\eta_{C}\right)  =\int_{\sigma\left(  T\right)  }f\eta_{C}%
dm_{T}=\int_{C}fdm_{T}=\lim_{\alpha\in\mathcal{D}_{C}}\omega_{\alpha}%
(f,m_{T},C)
\]
As a particular cases are $T=\Psi\left(  f_{T}\right)  $ and $I=\Psi\left(
1\right)  $
\[
T=\Psi\left(  f_{T}\right)  =\int_{\sigma\left(  T\right)  }f_{T}%
dm_{T};\ \ I=\Psi\left(  1\right)  =\int_{\sigma\left(  T\right)  }dm_{T}.\
\]

\section{Finite range self-adjoint operators}

In this section we will suppose that there exists an $n\in\mathbb{N}$ such
that $\lambda_{k}=0$, for $k\geq n+1$. Of course, the operator Sea
$T=\sum\limits_{i=1}^{\infty}\lambda_{i}P_{i}=\sum\limits_{i=1}^{n}\lambda
_{i}P_{i}$ and it is an operator of finite range.

Note that, in this case, $\sigma\left(  T\right)  =\left\{  \lambda
_{0},\lambda_{1},....,\lambda_{n}\right\}  \text{ with }\lambda_{0}=0$

As before,
\[
\mathcal{L}_{T}=\left\{  \alpha_{0}I+\sum\limits_{j=1}^{\infty}\alpha_{j}%
T^{j}\in\mathcal{L}\left(  c_{0}\right)  :\lim_{j\rightarrow\infty}\alpha
_{j}T^{j}=0\right\}
\]

Since $T^{j}=\sum\limits_{i=1}^{n}\lambda_{i}^{j}P_{i}\text{ and}$%

\[
\sum\limits_{j=1}^{\infty}\alpha_{j}T^{j}=\sum\limits_{j=1}^{\infty}%
\sum\limits_{i=1}^{n}\alpha_{j}\lambda_{i}^{j}P_{i}=\sum\limits_{i=1}%
^{n}\left(  \sum\limits_{j=1}^{\infty}\alpha_{j}\lambda_{i}^{j}\right)
P_{i},
\]

we conclude that%
\[
\mathcal{L}_{T}\subset\left[  I,P_{1},...,P_{n}\right]
\]

\begin{theorem}
Suppose that $\lambda_{i}\neq\lambda_{j}$ $\forall i,j=1,2,...,n$ with $i\neq
j$. Then,%
\[
\mathcal{L}_{T}=[\{I,P_{1},...,P_{n}\}]
\]

\end{theorem}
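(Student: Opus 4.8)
The plan is to establish the reverse inclusion $[\{I,P_{1},\ldots,P_{n}\}]\subseteq\mathcal{L}_{T}$, since the inclusion $\mathcal{L}_{T}\subseteq[\{I,P_{1},\ldots,P_{n}\}]$ has already been obtained just before the statement. As $\mathcal{L}_{T}$ is a subspace containing $I$, it suffices to prove that each spectral projection $P_{i}$ ($i=1,\ldots,n$) belongs to $\mathcal{L}_{T}$; once this is done, every linear combination $\alpha_{0}I+\sum_{i=1}^{n}\alpha_{i}P_{i}$ lies in $\mathcal{L}_{T}$ and the two spaces coincide. The whole difficulty is therefore to exhibit each $P_{i}$ as a polynomial in $T$, and the natural device is Lagrange interpolation over the finite spectrum $\sigma(T)=\{\lambda_{0},\lambda_{1},\ldots,\lambda_{n}\}$ with $\lambda_{0}=0$.

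The key point is that the $\lambda_{1},\ldots,\lambda_{n}$ are pairwise distinct and, since $\lambda_{0}=0$ is a genuine (distinct) eigenvalue of the finite-range operator $T$, we have $\lambda_{i}\neq\lambda_{k}$ for all $i\neq k$ in $\{0,1,\ldots,n\}$; in particular $\lambda_{i}\neq 0$ for $i\geq 1$. Fix $i\in\{1,\ldots,n\}$ and set
\[
q_{i}(t)=\prod_{\substack{k=0\\k\neq i}}^{n}\frac{t-\lambda_{k}}{\lambda_{i}-\lambda_{k}},
\]
whose denominators are all nonzero by the previous observation. By construction $q_{i}(\lambda_{i})=1$ and $q_{i}(\lambda_{k})=0$ for every $k\in\{0,1,\ldots,n\}\setminus\{i\}$; in particular the factor $k=0$ forces $q_{i}(0)=0$. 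Writing $q_{i}(t)=\sum_{j=0}^{n}c_{j}t^{j}$ with $c_{0}=q_{i}(0)=0$, and using $T^{j}=\sum_{k=1}^{n}\lambda_{k}^{j}P_{k}$ together with $P_{h}P_{k}=\delta_{hk}P_{k}$, I would compute
\[
q_{i}(T)=\sum_{k=1}^{n}q_{i}(\lambda_{k})\,P_{k}=q_{i}(\lambda_{i})\,P_{i}=P_{i}.
\]
Hence $P_{i}=q_{i}(T)\in\langle\{I,T\}\rangle\subseteq\mathcal{L}_{T}$ for each $i$.

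With every $P_{i}$ in $\mathcal{L}_{T}$ and $I\in\mathcal{L}_{T}$ trivially, the span $[\{I,P_{1},\ldots,P_{n}\}]$ is contained in $\mathcal{L}_{T}$, and combined with the inclusion already proved this yields the claimed equality. The step I expect to be the only real obstacle is the evaluation $q_{i}(T)=P_{i}$: one must handle the constant term carefully, since in general $q_{i}(T)=c_{0}\bigl(I-\sum_{k=1}^{n}P_{k}\bigr)+\sum_{k=1}^{n}q_{i}(\lambda_{k})P_{k}$, so it is exactly the condition $q_{i}(0)=c_{0}=0$ (guaranteed by including the factor corresponding to $\lambda_{0}=0$) that kills the contribution on the kernel $R(I-\sum_{k}P_{k})$ and leaves $q_{i}(T)=P_{i}$ rather than $P_{i}$ plus a multiple of the complementary projection. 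Equivalently, one may argue through the Gelfand isomorphism $\mathcal{L}_{T}\cong C(\sigma(T))$: since $\sigma(T)$ is finite, $C(\sigma(T))$ is spanned by the characteristic functions $\eta_{\{\lambda_{k}\}}$, whose images under $\Psi$ are precisely the projections $P_{k}$ (with $\eta_{\{\lambda_{0}\}}\mapsto I-\sum_{k\geq 1}P_{k}$), giving the same conclusion.
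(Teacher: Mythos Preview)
Your proof is correct and follows essentially the same route as the paper: both exhibit each $P_{i}$ as the polynomial $\prod_{k\neq i}\frac{T-\lambda_{k}I}{\lambda_{i}-\lambda_{k}}$ in $T$, you via Lagrange interpolation and the paper via the Van der Monde system $T^{j}=\sum_{i}\lambda_{i}^{j}P_{i}$ solved for $P_{k}$. Your treatment of the constant term (using $q_{i}(0)=0$ to eliminate the contribution on $R(I-\sum_{k}P_{k})$) is in fact more explicit than the paper's, which simply states the product formula after invoking the Van der Monde method.
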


\begin{proof}
Using the Van der Monde method to calculate determinants whose columns are
geometric progressions, we have%
\[
\left\vert
\begin{array}
[c]{cccc}%
\lambda_{1} & \lambda_{2} & ... & \lambda_{n}\\
\lambda_{1}^{2} & \lambda_{2}^{2} & ... & \lambda_{n}^{2}\\%
\begin{array}
[c]{c}%
.\\
.
\end{array}
&
\begin{array}
[c]{c}%
.\\
.
\end{array}
& \cdots &
\begin{array}
[c]{c}%
.\\
.
\end{array}
\\
\lambda_{1}^{n} & \lambda_{2}^{n} & ... & \lambda_{n}^{n}%
\end{array}
\right\vert =\left(  \underset{i=1}{\overset{n}{\Pi}}\lambda_{i}\right)
\left(  \underset{1\leq i<j\leq n}{\Pi}\left(  \lambda_{j}-\lambda_{i}\right)
\right)  \neq0
\]
since $\lambda_{i}\not =\lambda_{j}$ \ $\forall i\not =j$.

Considering the equation system%
\begin{align*}
T  &  =\lambda_{1}P_{1}+\lambda_{2}P_{2}+....+\lambda_{n}P_{n}\\
T^{2}  &  =\lambda_{1}^{2}P_{1}+\lambda_{2}^{2}P_{2}+...+\lambda_{n}^{2}%
P_{n}\\
&  ...\\
T^{n}  &  =\lambda_{1}^{n}P_{1}+\lambda_{2}^{n}P_{2}+...+\lambda_{n}^{n}P_{n}%
\end{align*}
and applying the Van der Monde method for operators, we obtain%
\[
P_{k}=\underset{i=0,i\neq k}{\overset{n}{\Pi}}\left(  \frac{\lambda_{i}%
I-T}{\lambda_{i}-\lambda_{k}}\right)  \text{ }k=1,2,...,n.
\]
Therefore, $P_{1},...,P_{n}\in\mathcal{L}_{T}$ and, as a consequences,
$\mathcal{L}_{T}=\left[  \left\{  I,P_{1},P_{2},...,P_{n}\right\}  \right]  .$
\end{proof}

\begin{remark}
On the other hand, suppose, for instance, that $\lambda_{1}=\lambda_{2}$ and
\ $\lambda_{i}\neq\lambda_{j}$ for $i,j=2,...,n$ with $i\neq j$. Then,
\[
T=\lambda_{1}\left(  P_{1}+P_{2}\right)  +\lambda_{3}P_{3}+...+\lambda
_{n}P_{n}.
\]
Using the same arguments as in the previous theorem, we get that
\[
\mathcal{L}_{T}=\left[  I,P_{1}+P_{2},P_{3},...,P_{n}\right]  .
\]
Note that $P_{1}\notin\mathcal{L}_{T}$. In fact, if
\begin{equation}
P_{1}=\alpha_{0}I+\alpha_{1}(P_{1}+P_{2})+...+\alpha_{n-1}P_{n},
\end{equation}
then%
\[
P_{1}=P_{1}P_{1}=\left(  \alpha_{0}I+\alpha_{1}(P_{1}+P_{2})+...+\alpha
_{n-1}P_{n}\right)  P_{1}=\left(  \alpha_{0}+\alpha_{1}\right)  P_{1}%
\]
which implies that $\alpha_{0}+\alpha_{1}=1$. On the other hand, if we do the
something, but this time with $P_{2}$ in $\left(  5.1\right)  $, we get that
$\alpha_{0}+\alpha_{1}=0,$ which is a contradiction. In similar way, we can
prove that $P_{2}\notin\mathcal{L}_{T}$.\newline
\end{remark}

Observe that%
\[
\Psi\left(  \eta_{\left\{  \lambda_{k}\right\}  }\right)  =G^{-1}\left(
\eta_{\left\{  \lambda_{k}\right\}  }\right)  =P_{k},\ k\neq0
\]
In fact, the Gelfand transformation of $P_{k}$ is the following:
\begin{align*}
G_{P_{k}}\left(  \phi_{j}\right)   &  =\phi_{j}\left(  P_{k}\right)  =\phi
_{j}\left(  \underset{i=0,i\neq k}{\overset{n}{\Pi}}\left(  \frac{\lambda
_{i}-T}{\lambda_{i}-\lambda_{k}}\right)  \right) \\
&  =\underset{i=0,i\neq k}{\overset{n}{\Pi}}\left(  \frac{\phi_{j}\left(
\lambda_{i}I\right)  -\phi_{j}\left(  T\right)  }{\lambda_{i}-\lambda_{k}%
}\right)  =\underset{i=0,i\neq k}{\overset{n}{\Pi}}\left(  \frac{\lambda
_{i}-\lambda_{j}}{\lambda_{i}-\lambda_{k}}\right) \\
&  =\left\{
\begin{array}
[c]{ccc}%
1 & if & j=k\\
0 & if & j\neq k
\end{array}
\right.  =\eta_{\left\{  \lambda_{k}\right\}  }\left(  \lambda_{j}\right)
\end{align*}
If $k=0,$%
\[
\Psi\left(  \eta_{\left\{  \lambda_{0}\right\}  }\right)  =G^{-1}\left(
\eta_{\left\{  \lambda_{0}\right\}  }\right)  =I-\sum_{i=1}^{n}P_{i}%
\]

Note that the corresponding Boolean ring $\Omega\left(  \sigma\left(
T\right)  \right)  $ is the collection of all subset of $\sigma\left(
T\right)  $, $\ $and its measure
\[
m:\Omega\left(  \sigma\left(  T\right)  \right)  \rightarrow\mathcal{L}_{T}%
\]
is defined by $m\left(  \left\{  \lambda_{k}\right\}  \right)  =\Psi\left(
\eta_{\left\{  \lambda_{k}\right\}  }\right)  .$

If the eigenvalues are pairwise different, then$\ $%
\begin{align*}
m(\emptyset)  &  =0\\
m(\sigma(T))  &  =I\ \\
m(\{\lambda_{0}\})  &  =I-\sum_{i=1}^{n}P_{i};\ \\
m(\{\lambda_{i}\})  &  =P_{i}\ \text{for}\ i\neq0\\
\ m(\{\lambda_{0},\lambda_{i}\})  &  =I-\underset{1\leq j\leq n;\ j\neq
i}{\sum}P_{j}%
\end{align*}
\newline and for $f\in C\left(  \sigma(T),\mathbb{K}\right)  $
\[
\int_{\sigma\left(  T\right)  }fdm=f(\lambda_{0})I+\sum_{i=1}^{n}\left[
f(\lambda_{i})-f(\lambda_{0})\right]  P_{i}.
\]

On the other hand, if a couple of the eigenvalues are equal, say for example
$\lambda_{1}=\lambda_{2}$ and the rest of them are different, then
\begin{align*}
m(\emptyset)  &  =0\ \\
m(\sigma(T))  &  =I\\
m(\{\lambda_{0}\})  &  =I-(P_{1}+P_{2})-\sum_{i=3}^{n}P_{i}\\
m(\{\lambda_{1}\})  &  =P_{1}+P_{2}\\
m(\sigma(T)-\{\lambda_{1}\})  &  =I-(P_{1}+P_{2})\\
\ m(\{\lambda_{i}\})  &  =P_{i}\ \text{if }i\not =1,2\\
\ m(\{\lambda_{0},\lambda_{1}\})  &  =I-\sum_{j=3}^{n}P_{j}\\
m(\{\lambda_{0},\lambda_{i}\})  &  =I-(P_{1}+P_{2})-\sum_{j=3}^{n}%
P_{j}\ \text{if }i\not =1,2\ and\ j\not =i\newline%
\end{align*}
and for $f\in C\left(  \sigma(T),\mathbb{K}\right)  $
\[
\int_{\sigma\left(  T\right)  }fdm=f(\lambda_{0})I+\left[  f(\lambda
_{1})-f(\lambda_{0})\right]  (P_{1}+P_{2})+\sum_{i=3}^{n}\left[  f(\lambda
_{i})-f(\lambda_{0})\right]  P_{i}.
\]

\bigskip\bigskip

\end{document}